\documentclass{amsart}

\usepackage{amsmath,amssymb,amsthm,verbatim}
\usepackage{hyperref}
\usepackage{color}

\newtheorem{theorem}[subsection]{Theorem}
\newtheorem{lemma}[subsection]{Lemma}
\newtheorem{cor}[subsection]{Corollary}
\newtheorem{prop}[subsection]{Proposition}

\newtheorem{property}[subsection]{Property}

\newtheorem*{assertion*}{Assertion}

\theoremstyle{definition}
\newtheorem{definition}[subsection]{Definition}
\newtheorem{remark}[subsection]{Remark}

\newcommand{\dist}{\mathrm{dist}}

\newcommand{\Ric}{\mathrm{Ric}}

\newcommand{\R}{\mathbb{R}}

\newcommand{\N}{\mathbb{N}}

\newcommand{\del}{\partial}

\newcommand{\cS}{\mathcal{S}}

\newcommand{\loc}{loc}

\DeclareMathOperator{\Div}{div}

\begin{document}

\title[Neumann eigenvalue estimates and elliptic regularity]{Sharp Neumann eigenvalue estimates and $C^2$ elliptic regularity in non-obtuse polyhedral domains}
\author{Nick Edelen}
\address{Department of Mathematics, University of Notre Dame, Notre Dame, IN 46556 USA}
\email{nedelen@nd.edu}
\author{Chao Li}
\address{Courant Institute, New York University, 251 Mercer St, New York, NY 10012, USA}
\email{chaoli@nyu.edu}

\begin{abstract}
    For integers $n\ge 1$, consider assertions:
    
    \noindent $\mathbf{(P_n)}$: Let $\Omega \subset S^{n-1}$ be a spherical domain enclosed by totally geodesic $S^n$'s with non-obtuse dihedral angles. Then in $[0,2(n+1)]$, its Neumann spectrum can only take values among $\{0,n,2(n+1)\}$. Moreover, $n$ is a Neumann eigenvalue if and only if the corresponding eigenfunction is the restriction of a linear function in $\R^{n+1}$, while $2(n+2)$ is a Neumann eigenvalue if and only if the corresponding eigenfunction is restriction of a quadratic polynomial in $\R^{n+1}$.
    

    \noindent $\mathbf{(Q_n)}$: A weak solution $u$ to $\Delta u = f$ with the Neumann boundary condition, with $f$ H\"older continuous, in a conical polyhedral domain $\Omega$ in $\R^n$ with non-obtuse dihedral angles, is in $C^{2,\alpha}_{\loc}(\overline \Omega)$.

    We prove the implications 
    \[\mathbf{(Q_n)} \Rightarrow  \mathbf{(P_n)},\qquad \mathbf{(P_n)}\Rightarrow \mathbf{(Q_{n+1})}.\]
    Consequently, both assertions hold in all dimensions. These give the optimal Neumann eigenvalue lower bound and $C^2$ elliptic regularity in non-obtuse Riemannian polyhedral domains.

\end{abstract}

\maketitle

\section{Introduction}

We develop a $C^2$ regularity theory for solutions to second-order elliptic equations in Riemannian polyhedral domains with the Neumann boundary conditions. Beyond their intrinsic interest, results of this type have had far-reaching applications in comparison geometry of curvature conditions, including a wave of recent progress on polyhedral rigidity theorems proposed by Gromov \cite{GromovDiracbilliards}. We start by definition the domains under consideration.

\begin{definition}[Riemannian polyhedron]\label{defi. riemannian polyhedorn}
    Let $M$ be a Hausdorff, second countable topological space. We say that $(M,g)$ is an $n$-dimensional smooth Riemannian polyhedron, if for every point $x\in M$, there exist a closed $n$-dimensional Riemannian manifold $(\hat M^n,\hat g)$ without boundary, smooth domains-with-boundary
    \begin{align*}
        M_1,\dots,M_k\subset \hat M, \qquad k\in \N,
    \end{align*}
    whose boundary hypersurfaces meet transversely, and an open set $U\subset \hat{M}$, such that $(M,g)$ is locally isometric to
    \begin{align*}
        (M_1\cap M_2\cap\dots\cap M_k\cap U, \hat{g}|_{M_1\cap M_2\cap\dots\cap M_k\cap U})).
    \end{align*}
    At a point $p\in \partial M\cap \partial M_i\cap \partial M_j$, the dihedral angle $\theta\in (0,\pi)$ at $p$ between $M_i$ and $M_j$ is defined via $\cos\theta = -\nu_i\cdot \nu_j$, where $\nu_j, \nu_j$ are the outward unit normal of $M_i, M_j$, respectively.
\end{definition}

Standard examples include \textit{Euclidean polyhedral cones} as well as \textit{spherical polyhedral domains}: we say $\Omega\subset \R^n$ is an Euclidean polyhedral cone if it is the intersection of finitely many closed half spaces and is dilation invariant; $\Omega\subset S^n$ is a spherical polyhedral domain, if it is the intersection of finitely many closed hemispheres in $S^n$.

In local coordinates, consider the second order divergence-form elliptic Neumann problem in $\Omega$:
\begin{equation}\label{eq. genera. elliptic.equation}
    \begin{cases}
    Lu = -\nabla_i (a^{ij} \nabla_j u) + b^i \nabla_i u + cu = f &\quad \text{ in }\mathring\Omega,\\
    a^{ij} \nu_i \nabla_j u = 0 &\quad \text{ on }\partial \Omega.
    \end{cases}
\end{equation}

The central question is to determine the optimal regularity of a weak solution $u$ under natural assumptions on the coefficients. Problems of this type have been studied extensively. Foundational contributions include Kondrat'ev's treatment of conical domains by means of weighted estimates \cite{KondratWeightedEstimates}, as well as the comprehensive monographs of Dauge \cite{DaugeBook} and Maz'ya--Rossmann \cite{MazyaRossmanBook}.

The guiding principle is that standard boundary regularity controls the smoothness of $u$ in the interiors of the faces of $\Omega$, whereas regularity near an edge or vertex is governed by the frozen boundary-value problem on the corresponding tangent cone. To illustrate this principle, let $\Omega\subset\R^{n}$ be a polyhedral cone and set $\Sigma = \Omega\cap S^{n-1}$  so that $\Sigma$ is a spherical polyhedral domain. A harmonic function $u$
on $\Omega$ satisfying the homogeneous Neumann boundary condition and having locally finite Dirichlet energy admits a Fourier expansion of the form
\[
    u(r,\theta)
    =
    a_0+\sum_{j\geq 1}a_j r^{\gamma_j}\phi_j(\theta),
\]
where $r$ is the radial variable, $\theta\in S^{n-1}$, and $\phi_j$ is a Neumann eigenfunction of $-\Delta$ on $\Sigma$ with eigenvalue $\mu_j$. The homogeneity exponent $\gamma_j$ is determined by
\begin{equation}\label{eq. expansion.rate.vs.eigenvalue}
    \gamma_j(\gamma_j+n-2)=\mu_j,\qquad j\geq 1.
\end{equation}
Thus, lower bounds for $\mu_j$ yield larger homogeneity exponents $\gamma_j$, and hence faster radial decay and improved regularity at the vertex. This principle has been used, through weighted estimates, to obtain regularity results on polyhedral domains; see, among others, \cite{DaugeNeumann.curvilinear,MazyaRossmannSchauder,Mazya2009boundednessofgradient}.

As observed by Maz\'ya \cite{Mazya2009boundednessofgradient}, a natural geometric assumption to guarantee boundedness of $\nabla u$ is \textit{convexity} of tangent cones of $\Omega$: indeed, by the standard Bochner formula, whenever the tangent cone $\Omega\subset \R^{n}$ is convex, its spherical link $\Omega\cap S^{n-1}$ satisfies $\mu_1\ge n-1$ (see, e.g. \cite{Mazya2009boundednessofgradient,EscobarEigenvalueEstimate}), which translates to $\lambda_1 \ge 1$ through \eqref{eq. expansion.rate.vs.eigenvalue} and gives the desired $C^{0,1}$ regularity.

In this paper, we seek an analogous geometric condition guaranteeing $C^2$ regularity for weak solutions of \eqref{eq. genera. elliptic.equation}. Such estimates are indispensable in geometric comparison and rigidity arguments \cite{LiDihedralRigidityPrisms,LiDihedralHyperbolic}, where curvature quantities involve second derivatives. At the level of the model cone, the critical spectral threshold is
\[
    \mu\geq 2n
    \quad\Longleftrightarrow\quad
    \gamma\geq 2.
\]
Consider, for example, the planar wedge $W_\omega = \{(r,\theta):r>0,\ 0<\theta<\omega\} \subset\R^2$.
The first positive Neumann eigenvalue of its spherical link is $\mu_1=\left(\frac{\pi}{\omega}\right)^2$. Consequently,
\[
    \omega\leq\frac{\pi}{2}
    \quad\Longleftrightarrow\quad
    \mu_1\geq 4
    \quad\Longleftrightarrow\quad
    \gamma_1\geq 2,
\]
which is precisely the threshold suggested by $C^2$ regularity.

This model indicates that a necessary local condition for a general $C^2$ regularity theory is that the cone be \emph{non-obtuse}: every dihedral angle should be at most $\pi/2$. Indeed, the tangent cone along a codimension-two singular stratum has the form $W_\omega\times\R^\ell$, and local $C^2$ regularity requires the opening angle $\omega$ of the wedge to be non-obtuse. This mechanism was previously implemented in the special setting in which $\Omega$ is locally modeled on $W\times\R^k\times[0,\infty)\times\R^{\ell-k}$ \cite{LiDihedralRigidityPrisms}; after even reflections across the flat boundary factor, the problem reduces to a product of a wedge with a Euclidean space.

The main result of this paper confirms this heuristic in all dimensions. In fact, we prove that, this sharp eigenvalue estimate for spherical polyhedral domains is intimately connected with the sharp regularity of elliptic equations. Precisely, we consider two dimension-dependent assertions $P_n$ and $Q_n$ as follows.

\begin{assertion*}[$P_n$]
    Let $\Omega \subset S^n$ be a spherical polyhedral domain with non-obtuse dihedral angles, and let $0 = \mu_0 < \mu_1 \leq \mu_2  \cdots$ be its Neumann spectrum.  Then $\{ \mu_i \}_{i} \cap [0, 2(n+1)] = \{ 0, n, 2(n+1) \}$.  Moreover, $n$ is a Neumann eigenvalue if and only if the corresponding eigenfunction is the restriction of a linear function, while $2(n+2)$ is a Neumann eigenvalue if and only if the corresponding eigenfunction is restriction of a quadratic polynomial.
\end{assertion*}

\begin{assertion*}[$Q_n$]
   Let $\Omega\subset\R^n$ be a polyhedral cone with non-obtuse dihedral
    angles. Suppose that $u\in W^{1,2}(\Omega \cap B_1)$ is a weak solution of
    \[
        \Delta u=f\quad\text{in }\mathring{\Omega},
        \qquad
        D_\nu u=0\quad\text{on }\partial\Omega,
    \]
    where $f \in C^\alpha(\Omega \cap B_1)$. Then there is a $\beta(\Omega, \alpha)$ so that $u \in C^{2,\beta}(\Omega \cap B_{1/2})$, and moreover
    \[
        |u|_{C^{2,\beta}(\Omega \cap B_{1/2})} \leq c(\Omega, \alpha)( ||u||_{L^2(\Omega \cap B_1)} + |f|_{C^\alpha(\Omega\cap B_1)})
    \]
\end{assertion*}

Our main theorem is:

\begin{theorem}\label{theorem. main. Pn and Qn}
    \[(P_n)\Rightarrow (Q_{n+1}) ,\qquad (Q_n)\Rightarrow (P_n).\]
    Consequently, both assertions $P_n$ and $Q_n$ hold in every dimension.
\end{theorem}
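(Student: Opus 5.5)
We argue by induction on dimension, proving the two implications separately. The base case is $Q_1$: a cone in $\R$ is a half-line or all of $\R$, and $u''=f$ with $u'(0)=0$ is clearly $C^{2,\alpha}$. Then $Q_1\Rightarrow P_1\Rightarrow Q_2\Rightarrow\cdots$ gives both assertions in all dimensions.

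\emph{Step 1: $(Q_n)\Rightarrow(P_n)$.} Let $\Omega\subset S^n$ be non-obtuse and let $\phi$ be a Neumann eigenfunction with eigenvalue $\mu\le 2(n+1)$. Choose $\gamma\ge0$ with $\gamma(\gamma+n-1)=\mu$; then $\gamma\le 2$. The function $u=r^\gamma\phi$ is harmonic in the cone $C(\Omega)\subset\R^{n+1}$ and satisfies the Neumann condition there.

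Each point of $\Omega\setminus\{\text{vertex}\}$ has a tangent cone isometric to $C'\times\R$, where $C'$ is a non-obtuse polyhedral cone of dimension at most $n$ (after splitting off further lines, possibly lower). Applying $(Q_n)$ to $u$ restricted to these product cones gives $u\in C^{2,\beta}$ away from the origin. Hence $\phi$ is $C^{2,\beta}$ up to $\partial\Omega$.

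We then run a Bochner/Reilly argument on $\Omega$. Since $\phi\in C^2(\overline\Omega)$ and the faces are totally geodesic, the boundary terms are controlled as follows:
\begin{itemize}
\item on a face, $\partial_\nu|\nabla\phi|^2=2\,\mathrm{II}(\nabla\phi,\nabla\phi)=0$;
\item at codimension-two strata, non-obtuseness gives the correct sign for the corner contributions. Alternatively, $C^2$ regularity means the singular set contributes nothing, since it has codimension $\ge 2$ and $\nabla^2\phi$ is bounded.
\end{itemize}

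Concretely, for $\gamma=1$ the Bochner identity on the sphere gives $\int|\nabla^2\phi+\phi g|^2=\int\big(\mu-n\big)(\ldots)$. This forces $\mu\ge n$ for $\mu>0$, with equality iff $\nabla^2\phi=-\phi g$, i.e.\ $\phi$ is the restriction of a linear function.

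For the second threshold, we instead work with $\nabla u$ in the cone. Each component $\partial_k u$ is harmonic, but it does not satisfy Neumann data in general. To handle this we use the quadratic-form argument: apply Bochner to $|\nabla u|^2$ together with the boundary identity $\partial_\nu |\nabla u|^2=0$ on flat faces. The eigenvalues of $\Delta_\Omega$ then satisfy a spectral gap: homogeneities in $(1,2)$ are ruled out because $\nabla u$ would be homogeneous of degree $\gamma-1\in(0,1)$. Its components, restricted to the sphere, would form a vector-valued function with controlled Rayleigh quotient below $n$. Projecting to the tangential/normal components on each face shows they satisfy mixed Dirichlet/Neumann conditions, and the non-obtuse angle condition makes the mixed problem's first eigenvalue at least the Neumann value $n$. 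Hence $\gamma-1\in\{0\}\cup[1,\infty)$. At $\gamma=2$, $\nabla u$ is linear, so $u$ is quadratic.

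\emph{Main obstacle.} The key difficulty is exactly this step: showing that the derivatives of $u$, with their mixed boundary conditions, cannot have homogeneity in $(0,1)$. I would first try to realize $|\nabla u|$ as a subharmonic function with nonpositive normal derivative on faces. Given $C^2$ regularity up to strata, this would give $\partial_\nu|\nabla u|^2=0$. Then the cone maximum-principle/Bochner estimate $\mu(|\nabla u|)\ge n$ applies to $|\nabla u|$ itself, via Kato and the fact that a degree-$(\gamma-1)$ homogeneous subharmonic Neumann function forces $\gamma-1\ge1$ or constancy.

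\emph{Step 2: $(P_n)\Rightarrow(Q_{n+1})$.} Let $\Omega\subset\R^{n+1}$ be a non-obtuse cone.

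\begin{enumerate}
\item \emph{Reduction to the vertex.} Away from the vertex, tangent cones split a line, so regularity there follows by induction on strata: lower-dimensional cones satisfy $Q_k$ for $k\le n$, which we obtain from $P_{k-1}$. This gives interior-stratum $C^{2,\beta}$ estimates scaling appropriately.
\item \emph{Homogeneous solutions at the vertex.} By $P_n$ and the separation of variables in the introduction, homogeneous harmonic Neumann functions have degrees $0$, $1$, $2$ (polynomials) or $\gamma>2$. So there is a gap $(2,2+\delta)$ with $\delta=\delta(\Omega)>0$.
\item \emph{Schauder-type iteration.} Run a Campanato argument: at each scale $r$, approximate $u$ by the best harmonic Neumann quadratic polynomial (these are exactly the degree $\le 2$ modes). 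The energy decay $\inf_P\|u-P\|_{L^2(B_r)}\le Cr^{2+\beta}$ with $\beta<\min(\alpha,\delta)$ follows by comparing with the harmonic Neumann solution. This comparison uses the spectral gap, after subtracting a particular quadratic solution of $\Delta P=f(0)$. Such a $P$ exists: for example $\tfrac{f(0)}{2(n+1)}|x|^2$, which satisfies the Neumann condition on faces through the origin.
\item \emph{Gluing.} Combine the vertex decay with the stratum estimates from (1) via standard scaling to obtain the global $C^{2,\beta}$ bound.
\end{enumerate}
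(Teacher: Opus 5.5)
\textbf{The gap is in Step 1, at the second threshold.} You flag this step yourself, and it is the entire new content of $(P_n)$.

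Bochner for $|\nabla\phi|^2$ only gives $\int_\Omega|\nabla^2\phi+\frac{\mu}{n}\phi g|^2=\frac{n-1}{n}\mu(\mu-n)\int_\Omega\phi^2$. This is positive for every $\mu>n$, so it excludes nothing in $(n,2(n+1))$.

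The mechanism you propose to go further is false. A nonnegative subharmonic function of positive homogeneity $s$ with zero Neumann data need not be constant or have $s\ge 1$. Take $|x|^s$ with $s\in(0,1)$: it is radial, so it has zero normal derivative on every face through the origin, and it is subharmonic in $\R^{n+1}$. Writing such a function as $r^s\psi(\theta)$ and integrating over the link only yields $s(s+n-1)\int_\Omega\psi\ge 0$, which is vacuous for $s>0$. Nor can $\mu_1\ge n$ be applied to $|\nabla u|$ on $S^n$: it is positive, not orthogonal to constants, and not an eigenfunction.

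The route through components of $\nabla u$ is not an argument either. A fixed component $\partial_k u$ has a clean Dirichlet or Neumann condition only on faces to which $e_k$ is normal or tangent. The field $\nabla u$ as a whole satisfies absolute-type conditions, and the claim that non-obtuseness lifts the first eigenvalue of that problem to $n$ is exactly what must be proved. Likewise, ``at $\gamma=2$, $\nabla u$ is linear'' does not follow: for $n\ge 2$, a $1$-homogeneous harmonic function on a proper subcone with no boundary condition need not be linear.

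The paper instead works with the traceless Hessian $T=\nabla^2u+\frac{\lambda}{n}ug$ on the link, which satisfies $\Delta T=(2n-\lambda)T$ and $\Div T=-\frac{n-1}{n}(\lambda-n)\nabla u$. Since $T\in L^\infty$ by $(Q_n)$, a logarithmic cutoff around the codimension-two strata gives $\int_\Omega|\nabla T|^2=(\lambda-2n)\int_\Omega|T|^2$. The sharp inequality $|\nabla T|^2\ge\frac{2n}{(n-1)(n+2)}|\Div T|^2$ then forces $\lambda\ge 2(n+1)$, and its equality case gives $\bar\nabla^3\bar u=0$ for the $2$-homogeneous extension.

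\textbf{Your sign idea does work one derivative higher.} In the cone, set $w=|\nabla^2u|$. It is subharmonic, being a norm of a vector of harmonic functions, and it is even under reflection across faces. It is bounded up to the strata by $(Q_n)$, with $\nabla^3u\in L^2$ there by the same log-cutoff argument. Crucially, it is homogeneous of the \emph{negative} degree $d=\gamma-2$. Writing $w=r^d\psi$ and testing with the cutoff gives $d(d+n-1)\int_\Omega\psi\ge0$. Since $(\gamma-2)(\gamma+n-3)<0$ for $\gamma\in(1,2)$ and $n\ge2$, this forces $\nabla^2u\equiv0$. At $\gamma=2$, integrating $\Delta|\nabla^2u|^2=2|\nabla^3u|^2$ over the link gives $\nabla^3u\equiv0$.

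Non-obtuseness enters only through the boundedness of $\nabla^2u$ at the strata. This is also why your chain cannot start with $Q_1\Rightarrow P_1$: for $n=1$ there are no strata, and an arc of length $2\pi/3$ has $\mu_1=9/4\in(1,4)$. Start instead from $Q_2$ on non-obtuse wedges.

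\textbf{Two smaller points.}

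First, in Step 1 the regularity of $\phi$ should come from $(Q_n)$ applied on the $n$-dimensional link in a conformal chart, as in the paper. It cannot come from product cones $C'\times\R\subset\R^{n+1}$, which $(Q_n)$ does not cover.

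Second, in Step 2, regularity near a stratum needs $Q$ for $C'\times\R^l\subset\R^{n+1}$, not $Q_k$ for $C'$. The paper handles this as follows:
\begin{enumerate}
\item it inducts on the symmetry dimension;
\item it proves $P$ for product cones;
\item it runs the vertex decay at every point of the spine $\{0\}\times\R^l$, with $C^\beta$ dependence of the quadratic approximation $q(y)$, before gluing.
\end{enumerate}
Your Campanato iteration is a fine substitute for the paper's separation-of-variables construction at the vertex, but the spine bookkeeping in steps 2 and 3 is missing.
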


An important special case where Assertion $(P_n)$ applies is when $\Omega\subset S^n$ is a spherical simplicial complex - that is, it is enclosed by precisely $(n+1)$ totally geodesic hypersurfaces in general positions. In this case, the restriction of any linear function in $\R^{n+1}$ cannot satisfy the Neumann boundary condition on $\partial \Omega$, hence $n$ is not in the Neumann spectrum. Consequently, we have:

\begin{cor}\label{corollary.eigenvalue.simplicial complex}
    Let $\Omega\subset S^n$ be a non-obtuse spherical simplicial complex. Then its first nonzero Neumann eigenvalue satisfies
    \[\mu_1(\Omega)\ge 2(n+1). \]
    Equality holds if and only if the corresponding eigenfunction is the restriction of a harmonic quadratic polynomial in $\R^{n+1}$.
\end{cor}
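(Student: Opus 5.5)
The corollary follows from Theorem \ref{theorem. main. Pn and Qn}, which gives $(P_n)$ in every dimension. The plan has two parts. First, rule out the eigenvalue $n$. Second, read off the sharp bound and the equality case from $(P_n)$.

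\emph{Excluding $\mu=n$.} Let $\Omega\subset S^n$ be cut out by $n+1$ hemispheres $\{x\cdot \nu_i\le 0\}$, $i=0,\dots,n$, in general position. Then $\nu_0,\dots,\nu_n$ are linearly independent and span $\R^{n+1}$. Suppose $n$ were a Neumann eigenvalue. By $(P_n)$ the eigenfunction is $\phi=\ell|_\Omega$ with $\ell(x)=a\cdot x$. Fix a face $F_i=\Omega\cap\{x\cdot\nu_i=0\}$; its outward conormal in $S^n$ is $\nu_i$, since $\nu_i\perp x$ on $F_i$. So the Neumann condition reads $\partial_{\nu_i}\phi=a\cdot\nu_i=0$ on $F_i$, where the tangential and radial parts agree because $\nu_i\cdot x=0$. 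Since $F_i$ has nonempty relative interior, this gives $a\cdot \nu_i=0$ for every $i$. Hence $a=0$, contradicting $\phi\not\equiv 0$. Therefore $n\notin\mathrm{spec}$.

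\emph{Sharp bound and equality.} By $(P_n)$, $\{\mu_i\}\cap[0,2(n+1)]\subset\{0,n,2(n+1)\}$, and we have just removed $n$. Hence every nonzero eigenvalue satisfies $\mu_1\ge 2(n+1)$. If equality holds, $(P_n)$ says the eigenfunction is the restriction of a quadratic polynomial $q$. We must check that $q$ can be taken harmonic. Write $q=q_2+q_0$ with $q_2$ homogeneous quadratic and $q_0$ constant; a linear part would have to vanish by the eigenvalue equation, since linear functions are eigenfunctions for $n$. On $S^n$ we have $|x|^2=1$, so we may replace $q_2$ by $h+c|x|^2$ with $h$ harmonic homogeneous. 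Then $\phi=h|_{S^n}+\text{const}$. Now $\Delta_{S^n}h=-2(n+1)h$, so the eigenvalue equation forces the constant to be zero. Conversely, if $h$ is a harmonic quadratic whose restriction satisfies the Neumann condition, then it is an eigenfunction with eigenvalue $2(n+1)$, so the bound is attained.

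\emph{Main obstacle.} The one point needing care is the Neumann condition at lower-dimensional strata, and whether each $F_i$ genuinely has nonempty interior. In general position, with all $n+1$ hemispheres essential, each face is a nondegenerate $(n-1)$-dimensional region. So the pointwise condition on its interior suffices, and $H^1$ eigenfunctions do not see the codimension-two strata.
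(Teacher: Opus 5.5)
Your proposal is correct and follows the paper's own route: you invoke $(P_n)$ (available in every dimension by Theorem~\ref{theorem. main. Pn and Qn}), and you exclude the eigenvalue $n$ because the $n+1$ linearly independent face normals force any Neumann-compatible linear function $a\cdot x$ to have $a=0$. From there you read off $\mu_1\ge 2(n+1)$ and the equality case. Your extra step showing that the quadratic can be taken harmonic is valid, but it is redundant, since Theorem~\ref{theorem.eigenvalue estimate} already gives harmonicity directly.
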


\subsection{Proof strategies}

The implication $(Q_n)\Rightarrow (P_n)$ is a sharp eigenvalue estimate on spherical domains. The key geometric input is a pointwise, tensor-valued equation for the traceless Hessian $T = \nabla^2 u - \tfrac{\Delta u}{n} g$ of any Laplacian eigenfunction $u$ on $S^n$:
\[\Delta T = (2n-\lambda)T, \]
where $\lambda$ is the eigenvalue (see Proposition \ref{prop. geometric equations for T}). The regularity supplied
by $(Q_n)$ allows us to integrate this to a Reilly-type formula for $|\nabla T|^2$ on a polyhedral domain. We emphasize that this argument, in the smooth setting, is not entirely new: it goes back to a classical work by Simon \cite{Simon1978} and was recently sharpened by Guan-Guo \cite{GuanGuo}, in connection with the study of Einstein manifolds. Our main new contribution here is a sharp characterization of the equality case \footnote{We emphasize that the equality characterization was also obtained, in a weaker form, in \cite{GuanGuo}.}, which is crucial to the regularity theorem.

The implication $(P_n)\Rightarrow(Q_{n+1})$ is a local regularity theorem. The spherical link of an $(n+1)$-dimensional non-obtuse polyhedral cone is an $n$-dimensional spherical polyhedral domain to which $(P_n)$ applies. The assertion $(P_n)$ rules out nonzero homogeneous Neumann solutions of degrees strictly between $1$ and $2$ and identifies the degree-two solutions with quadratic polynomials. This spectral information serves as an integrability condition: after subtracting the appropriate affine and quadratic approximations, it yields an excess-decay estimate and, ultimately, the desired $C^{2,\alpha}$ regularity.

\subsection{The use of AI}
We used ChatGPT Pro 5.6 Sol for reference searches, and to find the best constant in the algebraic Lemma \ref{lemma.algebraic inequality} as well as a clean presentation of its proof. Other parts of this article is solely at the responsibility of the authors. This article does not contain any AI-generated text.

\subsection{Acknowledgment}
The $C^2$ regularity problem stemmed naturally out of the the Ph.D. thesis of C.L. Since then, C.L. has benefited a lot from helpful conversations with various people including Sasha Logunov, Jonathan Zhu, Guofang Wei, Fanghua Lin, Guido De Philippis, Feng Luo and Jiakun Liu, and he wishes to thank them all. C.L. is supported by NSF grant DMS-2303624 and a Sloan Fellowship. N.E. is supported by NSF grant DMS-2506700 and a Simons Foundation travel award.  

\section{The implication {$(Q_n)\Rightarrow(P_n)$}}

In this section we prove the implication $(Q_n)\Rightarrow(P_n)$.  This is a sharp eigenvalue estimate assuming $C^2$ regularity of the corresponding eigenfunctions. The geometric argument in this section is a local, pointwise version of in the integral estimates in the recent work of Guan-Guo \cite[Lemma 3.1]{GuanGuo}, which is a refinement of a classical result due to Simon \cite{Simon1978} for eigenvalue estimates on Einstein manifolds. We nevertheless include all details, highlighting the use of the regularity assumption, as well as our contribution on the rigidity analysis, which plays a key role in its applications to regularity theorems.

\begin{theorem}\label{theorem.eigenvalue estimate}
    Assume Assertion $(Q_n)$ holds. Let $\Omega\subset S^n$  be a spherical polyhedral domain with non-obtuse dihedral angles. Then any Laplacian eigenvalue with the Neumann boundary condition cannot lie in $(n, 2(n+1))$. Moreover, any eigenfunction corresponding to the eigenvalue $2(n+1)$ is the restriction of a quadratic harmonic polynomial in $\R^{n+1}$ from the embedding $\Omega \subset S^n \hookrightarrow \R^{n+1}$.
\end{theorem}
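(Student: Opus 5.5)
Let $u$ be a Neumann eigenfunction on $\Omega\subset S^n$ with $-\Delta u=\lambda u$, and set $T=\nabla^2 u+\frac{\lambda}{n}u\,g$, the traceless Hessian. The plan is a Bochner/Reilly-type integral identity for $T$.

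\textbf{Step 1: regularity.} Extend $u$ homogeneously of degree $\gamma$ to the cone $C\Omega\subset\R^{n+1}$, where $\gamma(\gamma+n-1)=\lambda$. Then $\tilde u=r^\gamma u$ is harmonic with Neumann data. Fix a point $p\in\partial\Omega$. Near $p$ (away from the vertex), $C\Omega$ is locally a polyhedral cone of dimension $n$ times a line. Equivalently, we can work directly on $S^n$: after a smooth change of coordinates, the tangent cone of $\Omega$ at $p$ is an $n$-dimensional non-obtuse polyhedral cone. Applying $(Q_n)$ locally, with the lower-order terms absorbed into $f$ via the freezing/perturbation built into its statement, gives $u\in C^{2,\beta}(\overline\Omega)$. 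I expect this to need care. $(Q_n)$ is stated for the flat Laplacian on an exact cone, so the spherical metric must be handled, for instance by working with $\tilde u$ on the $(n+1)$-cone, where near $p$ the domain is a product of an $n$-cone with $\R$. One could then hope to reduce to $(Q_n)$ by a difference-quotient argument in the $\R$ direction. I regard this as the main technical obstacle.

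\textbf{Step 2: pointwise identity.} On $S^n$, commuting derivatives with $\Ric=(n-1)g$ gives $\Div T=\frac{n-1}{n}\nabla(\Delta u)+(n-1)\nabla u$. Since $\Delta u=-\lambda u$, this is $\Div T=\frac{(n-1)(n-\lambda)}{n}\nabla u$. A further Weitzenböck computation (Proposition \ref{prop. geometric equations for T}) yields
\[
\Delta T=(2n-\lambda)T,
\]
and hence $\tfrac12\Delta|T|^2=|\nabla T|^2+(2n-\lambda)|T|^2$.

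\textbf{Step 3: boundary terms.} Integrate over $\Omega$. The boundary term is $\int_{\partial\Omega}\langle\nabla_\nu T,T\rangle$. The $C^{2,\beta}$ regularity makes the integration by parts legitimate: the edges have codimension two and $T$ is continuous there, so we cut off near the singular strata and pass to the limit. On each totally geodesic face the Neumann condition $u_\nu=0$ differentiates tangentially to $T(\nu,e_i)=0$ for tangent $e_i$.

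Instead of directly bounding $\nabla_\nu T$, I would use the refined Guan--Guo/Simon integral. Decompose $|\nabla T|^2$ into its trace-free (Codazzi) part and the divergence part, and use $\int|\nabla T|^2\ge\frac{n}{n-1}\cdot(\text{stuff})\int|\Div T|^2$ via the Kato-type inequality for Codazzi tensors. Writing $\int\langle T,\nabla^2u\rangle=-\int\langle\Div T,\nabla u\rangle$ (boundary terms vanish since $T(\nu,\cdot)$ is normal and $u_\nu=0$), one gets an identity of the form
\[
c(\lambda)\int|T|^2=-\int|\nabla T-\text{(Codazzi projection)}|^2+\int_{\partial\Omega}B,
\]
where $c(\lambda)$ has the sign of $(\lambda-n)(\lambda-2(n+1))$. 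Here $B$ is a second fundamental form-type term. On each face it vanishes, since faces are totally geodesic and $T(\nu,e_i)=0$. The non-obtuse angles enter at the edges: on the face-to-face corners, the normal $\nu_j$ of one face has a component along the other face that is nonpositive, and this gives the correct sign for the corner contributions in the limiting cutoff. I expect this sign analysis at the edges to be the delicate part of Step 3.

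\textbf{Step 4: conclusion and rigidity.} For $\lambda\in(n,2(n+1))$ the identity forces $T\equiv0$. Then $\nabla^2u=-\frac{\lambda}{n}u g$, which by Obata-type reasoning on $S^n$ forces $\lambda=n$, a contradiction. For $\lambda=2(n+1)$ we get that $\nabla T$ equals its Codazzi projection, i.e.\ $\nabla T$ is pure trace. This makes the degree-two homogeneous extension $\tilde u=r^2u$ have constant Euclidean Hessian: one computes $D^2\tilde u$ on the cone in terms of $T$, $\nabla u$ and $u$, and checks that $D^3\tilde u=0$. Hence $\tilde u$ is a quadratic polynomial, which is harmonic since $\gamma=2$.
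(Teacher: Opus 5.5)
Your skeleton matches the paper's: the traceless Hessian $T$, $\Div T=-\frac{n-1}{n}(\lambda-n)\nabla u$, $\Delta T=(2n-\lambda)T$, an integrated identity, a pointwise comparison of $|\nabla T|^2$ with $|\Div T|^2$, and $D^3\bar u=0$ in the equality case. However, Step 3, where the theorem is actually decided, has two concrete problems.

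\textbf{The role of the non-obtuse hypothesis.} You place it in a sign of corner contributions, but there is no such term. The hypothesis is used up entirely in Step 1, where $(Q_n)$ makes $T$ bounded up to $\partial\Omega$. After that, the singular strata (codimension two in $S^n$) are removable, and there is nothing to analyze at the edges.
\begin{itemize}
\item On each face, $\langle\nabla_\nu T,T\rangle=0$. The even reflection of $u$ across the totally geodesic face is smooth, and this kills $T(\nu,e_i)$, $(\nabla_\nu T)(e_i,e_j)$ and $(\nabla_\nu T)(\nu,\nu)$. Note that $T(\nu,e_i)=0$ alone is not enough.
\item Take a log cutoff $\varphi$ around the strata, with $\int|\nabla\varphi|^2\le C|\log\varepsilon|^{-1}$. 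Testing $\Delta T=(2n-\lambda)T$ against $\varphi^2T$ first gives $\nabla T\in L^2(\Omega)$, by a Caccioppoli bound plus $T\in L^\infty$.
\item The error term $\int\varphi^2\Delta|T|^2$ is then bounded by $4\|T\|_{L^\infty}\|\nabla T\|_{L^2}\|\nabla\varphi\|_{L^2}\to0$. So $0=\int(2n-\lambda)|T|^2+|\nabla T|^2$ holds exactly.
\end{itemize}
The step you actually need near the edges, and did not supply, is $\nabla T\in L^2$. Continuity of $T$ does not by itself control the tube term $\int\langle\nabla_\nu T,T\rangle$.

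No edge-sign argument can replace boundedness of $T$. Take an obtuse lune in $S^2$ with angle $\omega\in(\pi/2,\pi)$, and geodesic polar coordinates $(s,\theta)$ about a vertex. Then $u=\sin^{\mu}\!s\,\cos(\mu\theta)$, with $\mu=\pi/\omega\in(1,2)$, is a Neumann eigenfunction with eigenvalue $\mu(\mu+1)\in(2,6)$. What fails there is exactly that $T\sim s^{\mu-2}$ blows up at the vertices.

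\textbf{The sharp constant and the sign.} The threshold $2(n+1)$ comes entirely from a sharp pointwise lemma that you leave as ``stuff'' and attribute to a Kato inequality for Codazzi tensors. That tool does not apply. On $S^n$, $\nabla^2u+ug$ is Codazzi, so
\[
(\nabla_kT)_{ij}-(\nabla_iT)_{kj}=\bigl(\tfrac{\lambda}{n}-1\bigr)(u_k\delta_{ij}-u_i\delta_{kj})\neq0
\]
for $\lambda\neq n$. In any case, a traceless Codazzi tensor is divergence-free, so such an inequality cannot bound $|\nabla T|$ below by $|\Div T|$.

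What is needed is the following. Let $A_{ijk}=(\nabla_kT)_{ij}$, which is symmetric and trace-free in $(i,j)$ only, and let $b_i=\sum_jA_{ijj}=(\Div T)_i$. Then $|A|^2\ge\frac{2n}{(n-1)(n+2)}|b|^2$. Equality holds iff $A=\frac{2n}{(n-1)(n+2)}\Phi$, where $\Phi_{ijk}=\frac12(\delta_{jk}b_i+\delta_{ik}b_j)-\frac1n\delta_{ij}b_k$. To prove it, check that $\langle B,\Phi\rangle=\sum_{i,j}B_{ijj}b_i$ for every $B$ with the same symmetries, and split $A=C+\frac{2n}{(n-1)(n+2)}\Phi$ orthogonally. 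Combined with $\int|\Div T|^2=\frac{n-1}{n}(\lambda-n)\int|T|^2$, this gives
\[
\frac{n}{n+2}\bigl(2(n+1)-\lambda\bigr)\int_\Omega|T|^2=-\int_\Omega|C|^2 .
\]
So the coefficient has the sign of $2(n+1)-\lambda$, which is positive on $(n,2(n+1))$. With the sign you state, $(\lambda-n)(\lambda-2(n+1))$, the coefficient is negative there; both sides of your identity are $\le0$ and nothing is forced. Any other constant would move the threshold. The equality case of this particular lemma is also what identifies $v=-\nabla u$ in your Step 4.

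\textbf{Step 1.} $(Q_n)$ has no perturbation built in, and a generic chart produces variable leading coefficients and a conormal boundary condition. The paper instead uses stereographic projection centered at $p$. This map is conformal, so:
\begin{itemize}
\item it sends the faces through $p$ to hyperplanes through $0$, giving an exact non-obtuse cone;
\item it preserves $D_\nu u=0$;
\item it turns the equation into $\Delta u=$ lower-order terms, which are H\"older by the $C^{1,\beta}$ estimate.
\end{itemize}
Your fallback also works: use the homogeneous extension, which near $p$ is exactly an $n$-cone times $\R$, then take difference quotients along $\R$ and apply $(Q_n)$ on slices.
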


The rest of this section is devoted to proving Theorem \ref{theorem.eigenvalue estimate}. 

    Denote by $g$ the round metric on $S^n$ and let $u\in W^{1,2}(\Omega)$ be a nonconstant eigenfunction so that
    \[\Delta u = -\lambda u, \qquad D_\nu u = 0.\]
    Consider the traceless Hessian of $u$:
    \[T= \nabla^2 u - \frac{\Delta u}{n} g= \nabla^2 u  + \frac{\lambda}{n} u g. \]

    We first deduce the necessary regularity property of $u$.

    \begin{prop}
        $u\in C^{2,\alpha}(\overline \Omega)$ for some $\alpha>0$.
    \end{prop}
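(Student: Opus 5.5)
The plan is to localize: near each point of $\overline\Omega$, rewrite the eigenfunction equation as a Neumann problem $\Delta v = f$ on a Euclidean polyhedral cone in $\R^n$, and then invoke $(Q_n)$. A direct chart is awkward because the round metric is not Euclidean in any chart. The trick I will use is homogeneous extension. Fix $p\in\overline\Omega\subset S^n$ and let $C = \{ r\theta : r>0,\ \theta\in\Omega\}\subset\R^{n+1}$ be the cone over $\Omega$. Choosing $\gamma>0$ with $\gamma(\gamma+n-1)=\lambda$, the function $U(r\theta) = r^\gamma u(\theta)$ is harmonic in $\mathring C$ and satisfies the Neumann condition on $\partial C$. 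However, $C$ is $(n+1)$-dimensional and $(Q_{n+1})$ is not available, so this route must be modified.

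Instead I will work in a projective (gnomonic) chart, which keeps the dimension at $n$. Centrally project a hemisphere around $p$ onto the tangent hyperplane $\{x_{n+1}=1\}$ after rotating $p$ to the north pole. Totally geodesic great spheres map to affine hyperplanes, so $\Omega$ near $p$ becomes a neighborhood of the vertex of a polyhedral cone $\Omega'\subset\R^n$ with vertex at the image of $p$. Dihedral angles are not preserved, but the angles at $p$ itself are preserved at the vertex to first order, since the chart is conformal (an isometry) at $p$. The equation, however, becomes a variable-coefficient divergence-form operator rather than $\Delta$.

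The cleaner alternative is to use normal coordinates, or any smooth chart $\phi$ with $\phi(p)=0$ and $d\phi_p$ an isometry, that sends the faces through $p$ to hyperplanes. Stereographic projection from $-p$ is a natural choice. It is conformal, so dihedral angles are preserved exactly everywhere, but it sends great spheres through $p$ to hyperplanes while sending other great spheres to round spheres. Only the faces through $p$ matter locally, so in this chart $\Omega$ near $p$ coincides with a non-obtuse polyhedral cone $\Omega'$ near $0$, with the same angles as the tangent cone at $p$. The metric is $g = \rho^2\delta$ with $\rho$ smooth, and in dimension $n$ the equation reads
\[
\Delta_g u = \rho^{-2}\bigl(\Delta_\delta u + (n-2)\rho^{-1}\nabla\rho\cdot\nabla u\bigr) = -\lambda u .
\]
Because $\rho$ is conformal, the Neumann condition $D_\nu u=0$ is unchanged in Euclidean terms. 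Thus $u$ solves
\[
\Delta_\delta u = f := -\lambda\rho^2 u - (n-2)\rho^{-1}\nabla\rho\cdot\nabla u
\]
with the Euclidean Neumann condition on $\Omega'\cap B_{r_0}$.

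To apply $(Q_n)$ we need $f\in C^\alpha$, so I will bootstrap. First, $u\in W^{1,2}$, and De Giorgi--Nash--Moser for the Neumann problem gives $u\in C^\alpha$; this works by even reflection or by the standard weak formulation, since the equation is uniformly elliptic in divergence form: $\Div(\rho^{n-2}\nabla u) = -\lambda\rho^n u$. The gradient term is the difficulty. To handle it, I avoid the gradient by writing the equation in divergence form with a constant principal part. Set $w=\rho^{(n-2)/2}u$, as in the conformal Laplacian. Then $\Delta_\delta w = Vw$ with $V$ smooth, and the Neumann condition becomes $D_\nu w = (\partial_\nu \rho^{(n-2)/2})\rho^{-(n-2)/2} w$. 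On the flat faces through $0$ this is not zero in general, so I will choose the conformal factor more carefully.

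Better: take the chart centered so that $\nabla\rho(0)=0$. Stereographic projection from $-p$ has $\rho = 2/(1+|x|^2)$, whose gradient $-4x/(1+|x|^2)^2$ is radial. Since the faces through $0$ are cones containing radial directions, $\partial_\nu\rho=0$ on them. Hence $w$ satisfies the pure Neumann condition and $\Delta w = Vw$ with $V$ smooth. Now $w\in C^\alpha$ gives $Vw\in C^\alpha$, and $(Q_n)$ yields $w\in C^{2,\beta}$ near $0$, hence $u\in C^{2,\beta}$ near $p$. Finitely many such neighborhoods cover the compact set $\overline\Omega$, which gives $u\in C^{2,\alpha}(\overline\Omega)$ with $\alpha$ the minimum of the resulting exponents.

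I expect the main obstacle to be exactly this boundary-condition bookkeeping: verifying that the conformal change $w=\rho^{(n-2)/2}u$ preserves the homogeneous Neumann condition on all faces through $p$ (radial $\nabla\rho$ tangent to each face), and securing the initial $C^\alpha$ bound for $w$ on a cone domain.
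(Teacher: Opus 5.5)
Your proof is correct, but it handles the key difficulty differently from the paper. Both arguments pass to the stereographic chart centered at $p$, where $g=\rho^2\delta$ and the faces through $p$ become a non-obtuse polyhedral cone. The difference is how the first-order term $(n-2)\rho^{-1}\nabla\rho\cdot\nabla u$ is treated.

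\emph{The paper's route.} The paper first takes $u\in C^{1,\beta}(\overline\Omega)$ from the classical regularity theory on convex domains. It then moves $A^{-1}\partial_iA\,\partial_iu$ to the right-hand side, which is therefore $C^\beta$. The conormal condition $A\,\partial_\nu u=0$ is just the Euclidean Neumann condition, so nothing about the conformal factor on the faces is used, and any conformal chart would work.

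\emph{Your route.} You remove the gradient term by the conformal-Laplacian substitution $w=\varphi u$ with $\varphi=\rho^{(n-2)/2}$. This would ordinarily turn the Neumann condition into a Robin condition with coefficient $\varphi^{-1}\partial_\nu\varphi$. You kill that coefficient by observing that $\nabla\rho$ is radial, hence tangent to every face through the vertex.

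\emph{Trade-off.} Your route needs only $w\in C^\alpha$, so the proposition follows from $(Q_n)$ without the $C^{1,\beta}$ input; the cost is dependence on this specific chart. For the $C^\alpha$ bound, cite De Giorgi--Nash--Moser for the conormal problem on a Lipschitz domain; even reflection alone does not handle general non-obtuse corners.

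\emph{Making the bookkeeping rigorous.} Do the boundary check in the weak form rather than formally. Test $\int\rho^{n-2}Du\cdot D\zeta=\lambda\int\rho^n u\zeta$ with $\zeta=\varphi^{-1}\psi$, where $\psi$ is supported near $0$. Then integrate $-\int\varphi^{-1}D\varphi\cdot D(w\psi)$ by parts. The boundary term $\int_{\partial\Omega'}\varphi^{-1}\partial_\nu\varphi\,w\psi$ vanishes, and you get $\int Dw\cdot D\psi=-\int Vw\psi$ with $V=\varphi^{-1}\Delta\varphi-\lambda\rho^2=-\bigl(\lambda+\tfrac{n(n-2)}{4}\bigr)\rho^2$. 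When $n=2$ this reduces to $w=u$.

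The detours through the homogeneous extension and the gnomonic chart can be deleted.
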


    \begin{proof}
        Since $\Omega$ is convex, we have that $u\in C^{1,\beta}(\overline \Omega)$ for some $\beta>0$. Take $p\in \partial \Omega$. If $p$ lies in $\mathring \Omega$ or the smooth part of $\partial \Omega$, then $u$ is locally $C^{2,\alpha}$ in a neighborhood of $p$ by standard regularity theory. Suppose now that $p$ lies on the singular part of $\partial \Omega$. Under the standard stereographic projection $\Phi$ centered at $p$, there exists a neighborhood $U$ of $p$, a non-obtuse Euclidean polyhedral cone $\Omega'$, so that
        \[\Phi: U\to \Omega' \text{ is a diffeomorphism}, \qquad g= \frac{4}{(1+|x|^2)^2} (dx)^2.\]
        Thus, in these coordinates, the equation $\Delta_g u = -\lambda u$ becomes
        \[\partial_i (A(x) \partial_i u) =   f \quad  \Rightarrow \quad \Delta u = A^{-1}f - A^{-1}\partial_i A \partial_i u,\]
        for some smooth $A$ and $f\in C^{0,\beta}(\overline{\Omega'})$. Thus, $(Q_n)$ implies that $u\in C^{2,\alpha}(\overline {\Omega'})$, for some $\alpha>0$. The conclusion follows from compactness of $\overline \Omega$.
    \end{proof}

    \begin{remark}
        A conceptually different proof of the desired regularity of $u$ is the following. Fix $p$ on the boundary of $\Omega$. Extend $u$ homogeneously to a harmonic function $\bar u$ in the cone over $\Omega$ in $\R^{n+1}$. After re-centering at $p$, the function $\bar u$ is harmonic in a polyhedral cone $ \Omega'$ in $\R^{n+1}$ and has at least one translation symmetry. Then Theorem \ref{thm:P-implies-Q}, applied in $\Omega'$, implies that $\bar u$ is $C^{2,\alpha}$ in $\overline{\Omega'}\cap B_{\varepsilon}$ for some $\varepsilon>0$.
    \end{remark}

    Thus $T\in C^{0,\alpha}(\overline \Omega)$. In fact, the minimum necessary regularity for this argument is that $T\in W^{2,\infty}(\Omega)$. By the Bochner formula,
    \begin{align*}
        \frac12 \Delta |\nabla u|^2 &= |\nabla^2 u|^2 + \langle \nabla u, \nabla \Delta u\rangle  + (n-1)|\nabla u|^2\\
        &= |\nabla^2 u|^2 - (\lambda -n+1)|\nabla u|^2.
    \end{align*}
    Observe that, on the smooth part of $\partial \Omega$,
    \[\partial_\nu |\nabla u|^2 = 2\nabla^2 u(\nu, \nabla u),\]
    and 
    \[0 = \partial_{\nabla u}\langle \nabla u, \nu\rangle = \nabla^2 u(\nabla u, \nu) - \langle \nabla u, \nabla_{\nabla u}\nu\rangle = \nabla^2 u(\nabla u, \nu). \]
    Here we have used that $\nabla_{\nabla u}\nu = 0$ since $\partial \Omega$ is totally geodesic.

    Because of $(Q_n)$, we know that $|\nabla u|^2\in C^{0,1}(\overline{\Omega})$. Round up the corner of $\Omega$ by $\Omega_\varepsilon$ and integrate the Bochner formula, we hence get that 
    \[\int_{\Omega_\varepsilon} |\nabla^2 u|^2 - (\lambda-n+1) |\nabla u|^2 = \int_{\partial \Omega_\varepsilon \setminus \partial \Omega} \partial_\nu |\nabla u|^2. \]
    As $\varepsilon\to 0$, we have that $|\partial \Omega_\varepsilon \setminus \partial \Omega| \to 0$, and $\partial_\nu |\nabla u|^2$ is uniformly bounded. Consequently, we obtain that
    \[\int_\Omega |\nabla^2 u|^2 = (\lambda-n+1) \int_\Omega |\nabla u|^2 = \lambda(\lambda-n+1)\int_\Omega u^2. \]

    Since $T$ is traceless, we have that 
    \begin{equation}\label{eq. integral T and u}
        \int_\Omega |T|^2  = \int_\Omega |\nabla^2 u|^2 - \frac 1n \int_\Omega (\Delta u)^2 = \frac{n-1}{n}\lambda(\lambda- n)\int_\Omega u^2.
    \end{equation}
    So we must have that $\lambda \ge n$. To prove Theorem \ref{theorem.eigenvalue estimate}, we assume that $\lambda>n$.

\begin{prop}\label{prop. geometric equations for T}
    We have:
    \begin{equation}\label{eq.div T}
            \Div T = -\frac{n-1}{n} (\lambda -n) \nabla u.
        \end{equation}
    \begin{equation}\label{eq.Delta T}
            \Delta T = (2n-\lambda) T.
        \end{equation}
   
\end{prop}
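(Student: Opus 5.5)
These are pointwise identities in the interior of $\Omega$. There $u$ is smooth by elliptic regularity, so the plan is a direct tensor computation on $(S^n,g)$ using $\Delta u=-\lambda u$. The only curvature input is the round curvature $R_{ijkl}=g_{ik}g_{jl}-g_{il}g_{jk}$, with $\Ric=(n-1)g$. Indices are raised with $g$, and $\Delta T=\nabla^k\nabla_k T_{ij}$ is the rough Laplacian on symmetric $2$-tensors.

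For \eqref{eq.div T}, I would compute $\nabla^j\nabla_j\nabla_i u$ by commuting derivatives:
\[\nabla^j\nabla_j\nabla_i u=\nabla_i\Delta u+\Ric_{ik}\nabla^k u=-\lambda\nabla_i u+(n-1)\nabla_i u.\]
Adding the divergence of $\frac{\lambda}{n}u g$, which is $\frac{\lambda}{n}\nabla u$, gives
\[\Div T=\Bigl(-\lambda+n-1+\frac{\lambda}{n}\Bigr)\nabla u=-\frac{n-1}{n}(\lambda-n)\nabla u.\]

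For \eqref{eq.Delta T}, I would compute $\Delta\nabla^2 u$. This requires commuting the Laplacian past two covariant derivatives. First, $\nabla_k\nabla_k\nabla_i\nabla_j u$ is turned into $\nabla_k\nabla_i\nabla_k\nabla_j u$ plus a curvature term acting on $\nabla^2 u$. Next, $\nabla_k\nabla_i$ is commuted. Finally, $\nabla_k\nabla_j u$ is swapped to $\nabla_j\nabla_k u$ and $\nabla_i\nabla_k\nabla_j\nabla_k u$ is reduced to $\nabla_i\nabla_j\Delta u$ plus Ricci terms. On a space form, the standard outcome (the Lichnerowicz Laplacian on constant curvature $1$) is
\[\Delta\nabla^2 u=\nabla^2\Delta u+2n\,\nabla^2 u-2(\Delta u)\,g.\]
I would sanity-check this against $\nabla^2$ of a linear function $x_a|_{S^n}$, for which $\nabla^2 u=-ug$ and $\Delta u=-nu$. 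Then the left side is $-\Delta u\, g=nug$, while the right side is $nug-2nug+2nug=nug$, so the check passes.

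Substituting $\Delta u=-\lambda u$ gives
\[\Delta\nabla^2u=(2n-\lambda)\nabla^2u+2\lambda ug,\]
and $\Delta(\frac{\lambda}{n}ug)=-\frac{\lambda^2}{n}ug$. Therefore
\[\Delta T=(2n-\lambda)\nabla^2u+\Bigl(2\lambda-\frac{\lambda^2}{n}\Bigr)ug=(2n-\lambda)\Bigl(\nabla^2u+\frac{\lambda}{n}ug\Bigr)=(2n-\lambda)T.\]

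The main obstacle is getting the curvature commutation in $\Delta\nabla^2u$ exactly right: signs, and the coefficient $2n$ versus $2(n-1)$. I would verify it through the trace as well. Tracing \eqref{eq.Delta T} must give $0$, consistent with $T$ being traceless. Tracing the formula for $\Delta\nabla^2 u$ gives $\Delta\Delta u=\Delta\Delta u+2n\Delta u-2n\Delta u$, which is consistent. Together with the linear-function test above, this pins down the constants.
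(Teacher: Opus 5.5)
Your proof is the paper's proof. You obtain the divergence identity from the Ricci identity, and $\Delta T$ from the sphere commutator formula $\Delta\nabla^2u=\nabla^2\Delta u+2n\nabla^2u-2(\Delta u)g$, which is correct; your subsequent algebra is also correct.

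One correction: your two sanity checks do not pin down the constants. Take a trial formula $\Delta\nabla^2u=\nabla^2\Delta u+a\nabla^2u-b(\Delta u)g$. The trace check and the linear-function test both reduce to the single relation $a=nb$. This is because linear functions have vanishing traceless Hessian, so that test also sees only the pure-trace part. For instance, $a=2(n-1)$, $b=2(n-1)/n$ passes both checks.

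Yet once $a=nb$, the trial formula yields $\Delta T=(a-\lambda)T$. So $a$ is exactly the constant $2n$ that later produces the threshold $2(n+1)$, and it is the one your checks never test. To confirm it, you can do either of the following:
\begin{enumerate}
\item Carry out the commutations you list. They contribute $1+n+(n-1)=2n$ to the coefficient of $\nabla^2u$ and $-1-1=-2$ to that of $(\Delta u)g$.
\item Test on a function with $T\neq 0$, e.g.\ $u=x_1x_2|_{S^n}$ with $n\ge 2$ and $\lambda=2(n+1)$. A direct computation gives $\nabla_kT_{ij}=-g_{ki}u_j-g_{kj}u_i+\frac{2}{n}g_{ij}u_k$, hence $\Delta T=-2T$, which forces $a=2n$.
\end{enumerate}
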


\begin{proof}
    Take coordinates $(x^1,\cdots,x^n)$ normal at a point. We use subscripts to denote a covariant derivative. We compute, for any $j$:
    \begin{align*}
        \nabla_k (\nabla^2 u)_{kj} &= \nabla_k \nabla_k u_j = \nabla_k \nabla_j u_k\\
        &= \nabla_j \nabla_k u_k + R_{kjkl} u_l\\
        &= \nabla_j (\Delta u) + \Ric_j^l u_l.
    \end{align*}
    This gives that 
    \[\Div \nabla^2 u = d (\Delta u) + \Ric (\nabla u, \cdot) = -\lambda du + (n-1)du.\]
    For the second assertion, we use the commutator identity on the sphere and deduce:
    \begin{align*}
        \Delta (\nabla^2 u) &= \nabla^2 (\Delta u) + 2n \nabla^2 u - 2(\Delta u) g\\
        &=(2n-\lambda) \nabla^2 u + 2\lambda ug. \qedhere
    \end{align*}
\end{proof}
    
    Using \eqref{eq.div T}, we have that
    \[\int_\Omega |\Div T|^2 = \left(\frac{n-1}{n}(\lambda-n)\right)^2 \int_\Omega |\nabla u|^2. \]

    Next, we would like to integrate \eqref{eq.Delta T}. This is the key place where we needed the $W^{2,\infty}$ regularity of $T$.
    
    \begin{lemma}\label{lemma. integrability of |nabla T| square}
        \begin{equation}\label{eq. integral of T and nabla T}
        0= \int_\Omega (2n-\lambda) |T|^2  + |\nabla T|^2.
    \end{equation}
    
    \end{lemma}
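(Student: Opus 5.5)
We integrate $\langle \Delta T, T\rangle = (2n-\lambda)|T|^2$ from \eqref{eq.Delta T} over $\Omega$, using
\[
\tfrac12\Delta|T|^2 = |\nabla T|^2 + \langle \Delta T, T\rangle .
\]
The identity \eqref{eq. integral of T and nabla T} then follows once we show that the boundary flux $\int_{\partial\Omega}\partial_\nu |T|^2$ vanishes. As in the Bochner computation above, we round off the singular strata by $\Omega_\varepsilon$ and integrate:
\[
\int_{\Omega_\varepsilon} |\nabla T|^2 + (2n-\lambda)|T|^2 = \tfrac12\int_{\partial\Omega_\varepsilon\cap\partial\Omega}\partial_\nu|T|^2 + \tfrac12\int_{\partial\Omega_\varepsilon\setminus\partial\Omega}\partial_\nu|T|^2 .
\]
We will show that the first boundary term vanishes pointwise and that the second tends to zero.

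For the first term, work at a point of the smooth part of a face $F$. Since $F$ is totally geodesic in $S^n$, it is locally the fixed set of a reflection $R$, an isometry of $S^n$. Because $D_\nu u=0$ on $F$ and $u$ is smooth up to $F$ away from the singular strata, the even reflection $\tilde u$ of $u$ across $F$ solves $\Delta\tilde u=-\lambda\tilde u$ weakly, hence classically, in a neighborhood of the point; so $u$ extends smoothly and evenly, i.e.\ $u\circ R=u$. Then $T$ is $R$-invariant, and so is $|T|^2$. Therefore $\partial_\nu|T|^2=0$ on the smooth part of $\partial\Omega$.

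For the second term, we need $\partial_\nu|T|^2 = 2\langle\nabla_\nu T, T\rangle$ to remain bounded (or at least integrable against the shrinking area) near the singular strata. Since $|\partial\Omega_\varepsilon\setminus\partial\Omega|\to 0$, it suffices to have $T\in W^{1,\infty}$, i.e.\ $u\in C^{2,1}$ up to the boundary. This is the $W^{2,\infty}$-type regularity the text flagged, and it is the main obstacle, since $(Q_n)$ only gives $C^{2,\alpha}$. We would first try to bootstrap. Differentiating the equation tangentially along a stratum and using the reflection symmetry across each face, derivatives of $u$ again satisfy Neumann-type problems, so $(Q_n)$ applied to $\nabla u$ in suitable directions (translations along the stratum in the stereographic chart) would upgrade $u$ to $C^{3,\alpha}$ near the strata. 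Alternatively, we can use the weaker hypothesis $|\nabla T|\in L^2$ with $T\in C^{0,\alpha}$ and choose the cutoff $\Omega_\varepsilon$ cleverly. Let $\eta_\varepsilon$ be a Lipschitz cutoff vanishing on an $\varepsilon$-neighborhood of the codimension-$\geq 2$ strata, with $\int|\nabla\eta_\varepsilon|^2\to 0$, which is possible since that set has codimension at least $2$. Test the equation $\Delta T=(2n-\lambda)T$ against $\eta_\varepsilon^2 T$. The cross term $\int \eta_\varepsilon |T|\,|\nabla T|\,|\nabla\eta_\varepsilon|$ is bounded by $\|T\|_\infty \|\nabla T\|_{L^2}\|\nabla\eta_\varepsilon\|_{L^2}\to 0$, provided $\nabla T\in L^2$. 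That in turn follows from a Caccioppoli estimate with the same cutoffs: absorbing gives $\int\eta_\varepsilon^2|\nabla T|^2\le C$ uniformly in $\varepsilon$.

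Letting $\varepsilon\to 0$ by dominated and monotone convergence yields $0=\int_\Omega (2n-\lambda)|T|^2+|\nabla T|^2$, which is \eqref{eq. integral of T and nabla T}.
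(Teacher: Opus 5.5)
Your final argument is correct and is essentially the paper's proof. You use Lipschitz cutoffs vanishing near the codimension-$\ge 2$ singular strata with $\int|\nabla\eta_\varepsilon|^2\to 0$ (the paper takes the explicit log cutoff with $\int|\nabla\varphi|^2\le C|\log\varepsilon|^{-1}$), then a Caccioppoli estimate using $T\in L^\infty$ to get $\nabla T\in L^2$, and then the cross term $\|T\|_{L^\infty}\|\nabla T\|_{L^2}\|\nabla\eta_\varepsilon\|_{L^2}\to 0$. So the $W^{1,\infty}$ rounding route and the sketched bootstrap to $C^{3,\alpha}$ are not needed. Your reflection argument giving $\partial_\nu|T|^2=0$ on the smooth faces is a welcome addition, since the paper uses the vanishing of these face boundary terms in its integrations by parts without stating it.
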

    \begin{proof}
        From \eqref{eq.Delta T}, we first obtain that, on the smooth part of $\Omega$, 
        \begin{equation}\label{eq.equation for Delta |T| square}
            \frac12 \Delta |T|^2 = |\nabla T|^2 + (2n-\lambda)|T|^2.
        \end{equation}
        Denote by $\cS$ the singular strata $\partial \Omega$. It follows that the codimension of $\cS$ is at least $2$. For $\varepsilon>0$, set $\rho = \dist (\cS,\cdot)$. Consider the standard log-cut off Lipschitz function
        \[\varphi = \begin{cases}
            0 &\qquad \rho\le \varepsilon^2\\
             \frac{-\log \rho + 2 \log \varepsilon}{\log \varepsilon} &\qquad \varepsilon^2 \le \rho\le \varepsilon\\
             1 &\qquad \rho> \varepsilon.
        \end{cases}\]
        We have that $\int |\nabla \varphi|^2 \le C |\log \varepsilon|^{-1}$ for $C$ independent of $\varepsilon$. By \eqref{eq.Delta T}, we have
        \begin{multline*}
            (\lambda-2n)\int |T|^2 \varphi^2 = -\int  \langle \Delta T, \varphi^2 T\rangle =\int \langle \nabla T, \nabla (\varphi^2 T)\\
            =\int2\phi \langle \nabla T, T\rangle \cdot \nabla \varphi + \langle \nabla T, \varphi^2\nabla T\rangle .
        \end{multline*}
        Thus, Young's inequality implies that
        \[\frac12 \int \varphi^2 |\nabla T|^2 \le |2n-\lambda|\int |T|^2 + 2\int |T|^2 |\nabla \varphi|^2. \]
        Sending $\varepsilon\to 0$ and using that $T\in L^\infty(\Omega)$, we conclude that $|\nabla T|\in L^2(\Omega)$. 

        Next, by \eqref{eq.equation for Delta |T| square}, we have that
        \[\int \varphi^2 \frac12 \Delta |T|^2 = \int \varphi^2 |\nabla T|^2  +(2n-\lambda)\int \varphi^2 |T|^2.\]
        Since $\varphi$ is supported on the regular part of  $\overline \Omega$ and $|\varphi|\le 1$, we have:
        \[ \left| \int \varphi^2 \Delta |T|^2 \right| = 4\left| \int |T| \langle \nabla |T|,\nabla \varphi\rangle \right| \le  4 \|T\|_{L^\infty} \|\nabla |T|\|_{L^2} \|\nabla \varphi\|_{L^2} \to 0,\]
        as $\varepsilon\to 0$. In the last step we used the Kato inequality $|\nabla |T||\le |\nabla T|$ so $\| \nabla |T|\|_{L^2(\Omega)} <\infty$.
    \end{proof}
    
    Next, we compare $|\nabla T|^2$ and $|\Div T|^2$ for a traceless tensor $T$.

    \begin{lemma}\label{lemma.algebraic inequality}
        Let $A$ be a $(0,3)$-tensor in $\R^n$ such that for all $i, j, k$,
        \[A_{ijk} = A_{jik}, \quad \sum_i A_{iik} = 0.\]
        Then 
        \[\sum_{i,j,k} |A_{ijk}|^2 \ge \frac{2n}{(n-1)(n+2)} \sum_i \left(\sum_j A_{ijj}\right)^2.\]
        Moreover, equality holds if and only if there exists a vector $v\in \R^n$ such that
			\[A_{ijk} = \delta_{jk} v_i + \delta_{ik} v_j - \frac2n \delta_{ij} v_k. \]
    \end{lemma}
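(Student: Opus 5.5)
The plan is to treat the inequality as a Cauchy--Schwarz estimate against the explicit family of tensors appearing in the equality case. For $v\in\R^n$ set
\[
B(v)_{ijk} = \delta_{jk} v_i + \delta_{ik} v_j - \frac2n \delta_{ij} v_k ,
\]
and write $w_i = \sum_j A_{ijj}$, so the right-hand side of the lemma is $\frac{2n}{(n-1)(n+2)}|w|^2$. First I will check that $B(v)$ lies in the same linear space $V$ as $A$:
\begin{itemize}
\item it is symmetric in $i,j$;
\item it is traceless in the first two indices, since $\sum_i B(v)_{iik} = v_k + v_k - 2v_k = 0$.
\end{itemize}

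Next come three short index computations.

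\emph{Pairing with $A$.} Using the symmetry $A_{ijk}=A_{jik}$ and the tracelessness $\sum_i A_{iik}=0$,
\[
\langle A, B(v)\rangle = \sum_{i,j,k} A_{ijk}B(v)_{ijk} = w\cdot v + w\cdot v - \frac2n\sum_k\Big(\sum_i A_{iik}\Big)v_k = 2\, w\cdot v .
\]

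\emph{Norm of $B(v)$.} Expanding the square gives three diagonal terms $n|v|^2$, $n|v|^2$, $\frac4n|v|^2$. The cross terms contribute $2|v|^2$ from the first and second summands, and $-\frac4n|v|^2$ twice from the pairings with the third. Hence
\[
|B(v)|^2 = \Big(2n+2-\frac4n\Big)|v|^2 = \frac{2(n-1)(n+2)}{n}|v|^2 .
\]

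\emph{Trace of $B(v)$.} For later consistency,
\[
\sum_j B(v)_{ijj} = \Big(n+1-\frac2n\Big)v_i = \frac{(n-1)(n+2)}{n}v_i .
\]

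With these in hand the inequality is Cauchy--Schwarz with $v=w$ (the case $w=0$ is trivial):
\[
|A|^2 \ge \frac{\langle A,B(w)\rangle^2}{|B(w)|^2} = \frac{4|w|^4}{\frac{2(n-1)(n+2)}{n}|w|^2} = \frac{2n}{(n-1)(n+2)}|w|^2 .
\]

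For the equality case, equality in Cauchy--Schwarz holds iff $A = c\,B(w)$ for some scalar $c$, i.e.\ $A=B(v)$ with $v=cw$; if $w=0$, equality forces $A=0=B(0)$. Conversely, suppose $A=B(v)$. The trace computation gives $w=\frac{(n-1)(n+2)}{n}v$, which is parallel to $v$, so Cauchy--Schwarz is sharp and equality holds. This matches the stated characterization.

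There is no real obstacle here. The only place needing care is the bookkeeping of cross terms in $|B(v)|^2$, since the exact constant $\frac{2n}{(n-1)(n+2)}$ depends on it; I will double-check it against the trace identity above.
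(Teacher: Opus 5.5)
Your proof is correct and is essentially the paper's argument. The paper takes $\Phi=\tfrac12 B(w)$ and writes $A=C+\frac{2n}{(n-1)(n+2)}\Phi$ with $C\perp\Phi$, which is exactly your Cauchy--Schwarz projection onto $B(w)$; you also verify the converse of the equality case (that $A=B(v)$ attains equality), which the paper leaves implicit.
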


    \begin{proof}
    
        Denote by $b_i = \sum_j A_{ijj}$ the $i$-th component of the right hand side. We introduce the following auxiliary tensor $\Phi$:
        \[\Phi_{ijk} =\frac12 (\delta_{jk}b_i + \delta_{ik}b_j ) - \frac1n \delta_{ij}b_k.\]
        We observe that $\Phi$ and $A$ has the same symmetries: obviously we have that $\Phi_{ijk} = \Phi_{jik}$, and:
        \[\sum_i \Phi_{iik} = \sum_{i\ne k} - \frac 1n b_k + b_k  -\frac 1n b_k = 0.\]
        Suppose now that $B$ is an arbitrary $(0,3)$-tensor admitting the same symmetry, that is, for all $i, j, k>0$, $B_{ijk} = B_{jik}$ and $\sum_i B_{iik} = 0$. We compute:
        \begin{equation}\label{eq. B pair Phi}
            \begin{split}
                \langle B, \Phi \rangle &= \sum_{i,j,k} B_{ijk} \left(\frac12 \delta_{jk}b_i + \delta_{ik} b_j - \frac 1n \delta_{ij}b_k  \right)\\
            &= \sum_{i,j}\left(\frac12 B_{ijj}b_i + \frac12 B_{iji} b_j -\frac1n B_{iij}b_j\right)\\
            &=\sum_{i,j} B_{ijj} b_i.
            \end{split}
        \end{equation}
        In particular,
        \begin{equation}\label{eq. A pair Phi}
            \langle A, \Phi \rangle = \sum_i b_i^2=|b|^2.
        \end{equation}
        We next compute $|\Phi|^2$. For this, we observe:
        \[\sum_j \Phi_{ijj} = \frac{n}{2} b_i + \frac12 b_i - \frac 1n b_i = \frac{(n-1)(n+2)}{2n}b_i. \]
        Thus by \eqref{eq. B pair Phi},
        \begin{equation}\label{eq.|Phi| square}
            |\Phi|^2 =\frac{(n-1)(n+2)}{2n} |b|^2.
        \end{equation}

        Now set $C = A - \frac{2n}{(n-1)(n+2)}\Phi$. We see that $C$ has the additional symmetry that $\sum_j C_{ijj} = 0$. Therefore, using \eqref{eq. B pair Phi} again we have that $\langle C, \Phi\rangle = 0$. Consequently, 
		\[|A|^2 = \left|C+ \frac{2n}{(n-1)(n+2)} \Phi\right|^2 = |C|^2 + \left|\frac{2n}{(n-1)(n+2)} \Phi\right|^2. \]
        The inequality follows. Moreover, equality holds if and only if $C = 0$, so $A = \frac{2n}{(n-1)(n+2)}\Phi$, as desired.
    \end{proof}

    To finish the proof, we apply Lemma \ref{lemma.algebraic inequality} to $A_{ijk} = (\nabla_k T)(\partial_i,\partial_j)$, and obtain that
    \[|\nabla T|^2 \ge \frac{2n}{(n-1)(n+2)} |\Div T|^2. \]
    Therefore
    \begin{align*}
        0 & = (2n-\lambda)\int |T|^2 + |\nabla T|^2 \\
          & \ge (2n-\lambda)\cdot \frac{n-1}{n}(\lambda -n) \int |\nabla u|^2 + \frac{2n}{(n-1)(n+2)} \int |\Div T|^2 \\
          & = \frac{(2n-\lambda)(n-1)}{n} (\lambda-n) \int |\nabla u|^2 \\
          &\qquad + \frac{2n}{(n-1)(n+2)} \left(\frac{n-1}{n} (\lambda-n)\right)^2 \int |\nabla u|^2.
    \end{align*}
    Since $u$ is nonconstant and $\lambda>n$, solving the above gives $\lambda \ge 2(n+1)$. This finishes the proof of the Neumann spectral gap.

    \vspace{1em}

It is well-known that any function $u$ on a subset of $S^n$ with $\Delta u = -n u$ is the restriction of a linear function in $\R^{n+1}$. We now characterize all eigenfunctions in the next mode.

Suppose $u\in C^2(\overline{\Omega})$ satisfies that
    \[\begin{cases}
        \Delta u = -2(n+1) u &\quad \text{ in }\Omega,\\
        \partial_\nu u = 0 &\quad \text{ on }\partial \Omega.
    \end{cases}.\]

    Set $T = \nabla^2 u + \tfrac{2(n+1)}{n} u g$ the traceless Hessian and $\bar u(x) = |x|^2 u(\tfrac{x}{|x|})$ the quadratic homogeneous extension of $u$ in the cone of $\Omega\subset \R^{n+1}$. It follows that $\bar u$ is harmonic. Fix a point in the interior of $\Omega$ and consider normal coordinates $\{x^1,\cdots, x^n\}$. Set $r=|x|$, $e_i = \partial_i$ and $e_0 = \partial_r$ the normal vector. Let $\bar \nabla$ be the Euclidean connection. Extend $\{e_i\}_{i=1}^n$ to an orthonormal frame in an open subset of $\R^{n+1}$.

    Tracking the proof of Theorem \ref{theorem.eigenvalue estimate}, we deduce that equality in Lemma \ref{lemma.algebraic inequality} holds for $A_{ijk} = (\nabla_k T)(e_i,e_j)$ pointwise. Thus there exists a tangent vector $v$ such that
    \[(\nabla_k T)(e_i,e_j) = \delta_{jk}v_i + \delta_{ik} v_j - \frac2n \delta_{ij}v_k. \]
    By Proposition \ref{prop. geometric equations for T}, $\sum_k \nabla_k(\nabla^2 u)_{kj} = \nabla_j (\Delta u) + (n-1) u_j = -2(n+1) u_j + (n-1)u_j$. Therefore $\sum_k (\nabla_k T)(e_k, e_j) = - \frac{(n+2)(n-1)}{n} u_j$. Comparison gives that $v= -\nabla u$.  Consequently, we have that
    \[(\nabla_k T)(e_i,e_j) = -\delta_{jk} u_i - \delta_{ik} u_j + \frac2n \delta_{ij}u_k,\]
    or equivalently, 
    \begin{equation}\label{eq. equality.for.third.derivative.u}
        (\nabla_k (\nabla^2 u))(e_i,e_j) + 2\delta_{ij} u_k + \delta_{jk} u_i + \delta_{ik} u_j = 0.
    \end{equation}
    We now deduce that \eqref{eq. equality.for.third.derivative.u} is precisely equivalent to that all third derivatives of $\bar u$ vanishes pointwise in the interior of the cone over $\Omega$. This would finish the proof.

    We proceed with computing $\bar \nabla_0 \bar u = 2ru$, so $\bar \nabla^2 \bar u (e_0,e_0) = 2 u$ and for $i, j>0$,
    \[\bar \nabla^2 \bar u(e_i, e_0) = \partial_i (2u) - du (\bar \nabla_{e_i} e_0 ) = u_i, \]
    \begin{multline*}
        \bar \nabla^2 u(e_i,e_j) = \nabla^2 u(e_i,e_j) -d\bar u (\bar \nabla_{e_i}e_j) = \nabla^2 u(e_i,e_j) -d\bar u(-\delta_{ij}e_0) \\
        = \nabla^2 u(e_i,e_j) + 2\delta_{ij} u.
    \end{multline*}
    Consequently, along tangential directions $i, j, k>0$, 
    \begin{align*}
        \bar \nabla^3_{e_i, e_j, e_k} \bar u&= \bar\nabla_{e_i} (\bar \nabla^2 \bar u) (e_j, e_k)\\
        &= e_i \left( (\bar \nabla^2 u) (e_j, e_k)\right) -\bar\nabla^2 \bar u(\bar \nabla_{e_i}e_j, e_k) - \bar \nabla^2 \bar u(e_j, \bar \nabla_{e_i}e_k)\\
        &=e_i \left( \nabla^2 u(e_j, e_k) + 2u \delta_{jk} \right) + \bar\nabla^2 \bar u(\delta_{ij} e_0, e_k) + \bar\nabla^2 \bar u (e_j, \delta_{ik} e_0)\\
        &=(\nabla e_i (\nabla^2 u))(e_j,e_k) + 2\delta_{kj} u_i + \delta_{ij} u_k +\delta_{ik} u_j = 0,
    \end{align*}
    by \eqref{eq. equality.for.third.derivative.u}. The fact that $\bar \nabla^3 \bar u = 0$ in directions containing $e_0$ follows from the fact that $\bar u$ is quadratic homogeneous.

\section{The implication $(P_n)\Rightarrow (Q_{n+1})$}

In this section we deduce the implication $(P_n) \Rightarrow (Q_{n+1})$.
Take $\Omega \subset \R^{n+1}$ is a polyhedral cone domain.  We say $\Omega$ is $l$-symmetric if (up to rigid motion) we can express $\Omega = \Omega_0 \times \R^l$, but \emph{cannot} write $\Omega = \Omega_0' \times \R^{l+1}$.  If $\Omega = \Omega_0 \times \R^l$ is $l$-symmetric, it will be convenient to write $x = (z, y) \in \R^{n-l} \times \R^l$.

Fix $\Omega$ a polyhedral cone domain in $\R^{n+1}$.  Since the link $\Omega \cap S^n$ is a Lipschitz domain, there is an $L^2(\Omega \cap S^n)$-orthonormal basis of Neumann eigenfunctions $\phi_i$ with associated eigenvalues $0 = \mu_0 < \mu_1 \leq \mu_2 < \ldots \to \infty$, so that each $\phi_i$ solves
\[
\int_{\Omega \cap S^n} \nabla \phi_i \cdot \nabla \zeta = \mu_i \int_{\Omega \cap S^n} \phi_i \zeta \quad \forall \zeta \in C^1(S^n).
\]
Following \eqref{eq. expansion.rate.vs.eigenvalue} it will be convenient to define $\gamma_i = - ((n-1)/2) + \sqrt{ ((n-1)/2)^2 + \mu_i}$, which are the admissible powers of homogeneous, locally-finite-energy harmonic functions in $\Omega$ with Neumann boundary data.

It is a classical fact (see e.g. \cite{Mazya2009boundednessofgradient,EscobarEigenvalueEstimate}), since $\Omega$ is convex, that if $\mu_i > 0$ then $\mu_i \geq n$, and the only eigenfunctions corresponding to $\mu_i = n$ are restrictions of linear functions.  This is equivalent to the condition that any $(\gamma > 0)$-homogeneous Neumann harmonic function in $\Omega$ must be linear, and is the key to $C^{1,\alpha}$ regularity for the Neumann problem in $\Omega$. We rely on the next order eigenvalue estimate supplied by Property $(P_n)$.

To prove regularity for polyhedral cone domains $\Omega \subset \R^n$ we will induct on the dimension of symmetry of $\Omega$.  It will be useful to distinguish properties $(P_n)$, $(Q_n)$ for a specific domain:

\begin{property}[Property $P_\Omega$]
Write $0 = \mu_0 < \mu_1 \leq \mu_2 < \cdots$ for the Neumann eigenvalues of the link $\Omega \cap S^n$, and $\phi_i$ for the associated Neumann eigenfunctions.

If $\Omega$ is $0$-symmetric, then $\mu_1 \geq 2(n+1)$.  If $\Omega = \Omega_0 \times \R^l$ is $l$-symmetric, then $\mu_1 = \ldots = \mu_l = n$ and $\phi_1, \ldots, \phi_l$ are restrictions to $\Omega \cap S^n$ of the linear functions on $\{0\} \times\R^l$, and $\mu_{l+1} \geq 2(n+1)$.  In either case, if any eigenfunction $\phi_i$ has eigenvalue $\mu_i = 2(n+1)$, then $\phi_i$ is the restriction of a homogeneous quadratic polynomial to $\Omega \cap S^n$.
\end{property}
An equivalent characterization of $(P_\Omega)$ is that any $(1+\alpha > 1)$-homogeneous solution of \eqref{eqn:eqn} with locally-finite energy is at least $2$-homogeneous, and can only be $2$-homogeneous if quadratic

We will show property $(P_\Omega)$ holds for any polyhedral cone domain with dihedral angle $\leq \pi/2$.  For this, we have seen a regularity estimate (property $(Q_\Omega)$ below) is necessary.

\begin{property}[Property $Q_\Omega$]
Let $u \in W^{1,2}(\Omega \cap B_1)$, $f \in C^\alpha(\Omega \cap B_1)$ for $\alpha \in (0, 1)$ solve 
\[
\Delta u = f \text{ in } \Omega \cap B_1, \quad D_\nu u = 0 \text{ in } \del \Omega \cap B_1
\]
in a distributional sense, i.e. so that
\[
\int_{\Omega \cap B_1} Du \cdot D\zeta = - \int f \zeta \quad \forall \zeta \in C^1_c(B_1).
\]
Then there is a $\beta(\Omega, \alpha) \in (0, 1)$ so that $u \in C^{2,\beta}(\Omega \cap B_1)$, and we have the estimate
\[
|u|_{C^{2,\beta}(\Omega \cap B_{1/2})} \leq c(\Omega, \alpha)( ||u||_{L^2(\Omega \cap B_1)} + |f|_{C^\alpha(\Omega \cap B_1)}).
\]
\end{property}

We now carry out the inductive step. We start by pointing out that the higher-order eigenvalue estimate of $(P_\Omega)$ is ultimately a property of $0$-symmetric domains.

\begin{lemma}\label{lem:add-cross}
Suppose we can write $\Omega = \Omega_0 \times \R^l$ where $\Omega_0$ is $0$-symmetric.  If statement $(P_{\Omega_0})$ holds, then $(P_\Omega)$ holds also, in which case any quadratic solution $u$ to \eqref{eqn:eqn} in $\Omega$ must split as
\begin{equation}\label{eqn:add-cross-concl}
u(z, y) = q_{ij} z_i z_j + a_{ij} y_i y_j.
\end{equation}
\end{lemma}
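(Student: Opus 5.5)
We use the equivalent homogeneous-solution formulation of $(P_\Omega)$ stated after the property: every locally-finite-energy, $\gamma$-homogeneous Neumann harmonic function in $\Omega$ with $\gamma>1$ must have $\gamma\ge 2$, and if $\gamma=2$ it must be a quadratic polynomial. The plan is to separate variables in the $y$-directions and reduce each piece to $\Omega_0$, where $(P_{\Omega_0})$ applies.

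\emph{Linear modes.} First we identify the eigenvalue-$n$ eigenfunctions. Since $\Omega$ is convex, any eigenfunction with $\mu=n$ is the restriction of a linear function $a\cdot x$. The Neumann condition on a face with normal $\nu$ is $a\cdot \nu = 0$. We claim this forces $a\in\{0\}\times\R^l$. Indeed, if $a$ had a nonzero component $a_z\in\R^{n-l}$, then $a_z$ would be tangent to every face of $\Omega_0$, because the face normals of $\Omega$ lie in $\R^{n-l}\times\{0\}$. Then $\Omega_0$ would be invariant under translation by $a_z$ and so would split off a line, contradicting $0$-symmetry. This gives $\mu_1=\dots=\mu_l=n$ with the stated eigenfunctions.

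\emph{Splitting a homogeneous solution.} Next, let $u$ be a $\gamma$-homogeneous Neumann harmonic function on $\Omega$ with $1<\gamma\le 2$. Its derivatives $\partial_{y_j}u$ are again Neumann harmonic, since the $y$-directions are tangent to all faces, and they are $(\gamma-1)$-homogeneous with $\gamma-1\in(0,1]$.

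\emph{Regularity of these derivatives.} This is the main obstacle: we need $\partial_{y_j}u$ to have locally finite energy. We would get it from tangential difference quotients. Translating in $y$ preserves $\Omega$, so the standard Caccioppoli estimate applied to difference quotients in $y$ gives $\partial_y u\in W^{1,2}_{loc}$, and iterating gives $\partial_y^2u\in W^{1,2}_{loc}$.

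\emph{Second $y$-derivatives.} Apply the convex result ($\mu\ge n$, linear eigenfunctions) to $\partial_{y_j}u$. Either it vanishes, or it is linear, which requires $\gamma-1=1$, so $\gamma=2$. The linear function must lie in $\{0\}\times\R^l$ by the first paragraph. Hence $\partial_{y_j}u$ depends only on $y$ and is linear in $y$, so $\partial_z\partial_y u=0$. Therefore $u(z,y)=v(z)+w(y)$ with $w$ a homogeneous quadratic $a_{ij}y_iy_j$ (or $w=0$).

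\emph{Reduction to $\Omega_0$.} Then $v$ is $\gamma$-homogeneous and Neumann on $\Omega_0$, with $\Delta v=-\Delta w$ constant. If $\gamma<2$ then $w=0$, so $v$ is harmonic. Applying $(P_{\Omega_0})$ in homogeneous form, $v$ has degree $\ge 2$, which contradicts $\gamma<2$ unless $v$ is linear. But a linear $v$ is excluded by the first paragraph applied to $\Omega_0$, and by homogeneity $\gamma>1$. If $\gamma=2$, subtract a quadratic $c|z|^2$ chosen so that $v-c|z|^2$ is harmonic. This does not affect the Neumann condition, because $\nabla|z|^2=2z$ and faces of the cone $\Omega_0$ pass through the origin, so $z\cdot\nu=0$. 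By $(P_{\Omega_0})$, $v-c|z|^2$ is quadratic, so $v=q_{ij}z_iz_j$. This gives \eqref{eqn:add-cross-concl}.

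Translating back to eigenvalues via $\gamma(\gamma+n-1)=\mu$ yields $\mu_{l+1}\ge 2(n+1)$ and the quadratic characterization at equality, which is $(P_\Omega)$.
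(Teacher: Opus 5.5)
Your proposal is correct and follows essentially the same route as the paper. You differentiate in the translation-invariant $y$-directions to get solutions of lower homogeneity, use the convex first-eigenvalue classification together with $0$-symmetry of $\Omega_0$ to eliminate $z$-dependence in $\partial_y u$, and then apply $(P_{\Omega_0})$ to the $z$-part after subtracting a suitable multiple of $|z|^2$. Your difference-quotient argument that $\partial_y u$ has locally finite energy, and your explicit identification of the eigenvalue-$n$ modes, supply details that the paper only asserts.
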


\begin{proof}
Let $\{ \gamma_{i,0} \}_i$, $\{ \gamma_{i} \}_i$ be the homogeneities (not counting multiplicity) of possible homogeneous, locally-finite-energy solutions to \eqref{eqn:eqn} in $\Omega_0$, $\Omega$ respectively.  It holds that $\gamma_{0, 0} = \gamma_0 = 0$, $\gamma_{1, 0} = \gamma_1 = 1$, and by our assumption we have $\gamma_{2, 0} = 2$.

Let $u \in W^{1,2}(\Omega \cap B_1)$ be a $\gamma$-homogeneous solution to \eqref{eqn:eqn} in $\Omega$.  We first note that necessarily $\gamma \geq 0$.  We second observe that any derivative $D_{y_i} u$ is a $(\gamma-1)$-homogeneous solution to \eqref{eqn:eqn} in $\Omega$, and also lies in $W^{1,2}(\Omega \cap B_1)$.  Therefore we must have $D^k_y u = 0$ for $k > \gamma$, and so $u$ takes the form
\[
u(z, y) = \sum_{|J|=0}^{\lfloor \gamma \rfloor} b_J(z) y^J
\]
where each $b_J(z)$ is $(\gamma-|J|)$-homogeneous.

If $\gamma \in (1, 2)$, then $u(z, y) = b_0(z) + b_{1, i}(z) y_i$ where $b_0$ is $(1+\alpha)$-homogeneous and $b_{1, i}(z)$ is $\alpha$-homogeneous.  We have
\[
0 = \Delta u = (\Delta_z b_0) + (\Delta_z b_{1,i}) y_i
\]
which implies each $b_0, b_{1, i}$ solve \eqref{eqn:eqn} in $\Omega_0$, but this contradicts the fact that neither $\alpha$ nor $1+\alpha$ is an admissible homogeneity in $\{ \gamma_{i, 0} \}_i$.  We deduce $\gamma_2 \geq 2$.

If $\gamma = 2$, then $u(z, y) = b_0(z) + b_{1, i}(z) y_i + b_{2, ij}(z) y_i y_j$, where $b_0$ is $2$-homogeneous, $b_{1, i}$ is $1$-homogeneous, and $b_{0, ij}(z)$ is $0$-homogeneous.  We have
\[
0 = \Delta u = (\Delta_z b_0 + 2 b_{2, ii}) + (\Delta_z b_{1, i}) y_i + (\Delta_z b_{2, ij}) y_i y_j,
\]
which implies $b_{2, ij}$, $b_{1, i}$, and $b_0 - \frac{1}{n+1+l} b_{2ii} |z|^2$ solve \eqref{eqn:eqn} in $\Omega_0$.

Since $(P_{\Omega_0})$ holds we must have $b_0(z) = q_{ij} z_i z_j$.  Since $\Omega_0$ is convex we must have $b_{1, i}(z) = p_{ij} z_j$ and $b_{2, ij}(z) = a_{ij} \in \R$, but since $\Omega_0$ is $0$-symmetric then in fact $b_{1, i} = 0$.  So $u$ takes the form \eqref{eqn:add-cross-concl}, and we deduce $(P_\Omega)$ holds.
\end{proof}

\subsection{Regularity estimate}

In this section $\Omega$ will always denote a polyhedral cone domain.  We are interested in $u \in W^{1,2}(\Omega \cap B_1)$ solving the Poincare equation with zero Neumann data, i.e. $u$ solving
\begin{equation}\label{eqn:eqn}
\int_{\Omega \cap B_1} Du \cdot D\zeta = - \int_{\Omega \cap B_1} f \zeta \quad \forall \zeta \in C^1_c(B_1),
\end{equation}
for some $f \in L^\infty(\Omega \cap B_1)$ or $f \in C^\alpha(\Omega \cap B_1)$.

We require the following classical regularity result.
\begin{theorem}[\cite{NazarovPlamenevsky+1994}]\label{thm:C1alpha-reg}
Let $u \in W^{1,2}(\Omega \cap B_1)$ solve \eqref{eqn:eqn} with $f \in L^\infty(\Omega \cap B_1)$.  Then there there is an $\alpha(\Omega) \in (0, 1)$ so that $u \in C^{1,\alpha}(\Omega \cap B_1)$, and
\[
|u|_{C^{1,\alpha}(\Omega \cap B_{1/2})} \leq c(\Omega) (||u||_{L^2(\Omega \cap B_1)} + ||f||_{L^\infty(\Omega \cap B_1)}).
\]
\end{theorem}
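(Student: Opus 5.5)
We would prove the estimate by a Campanato-type excess decay at the vertex, combined with induction on the dimension of the non-split factor of the cone. The spectral input is the classical one recalled above: since $\Omega$ is convex, Bochner's formula on the link $\Omega\cap S^{n}$ gives $\mu_1\ge n$, with equality only for restrictions of linear functions. Through \eqref{eq. expansion.rate.vs.eigenvalue}, this says that a homogeneous, locally-finite-energy Neumann harmonic function in $\Omega$ of degree $\gamma>0$ either is linear (and then $\gamma=1$) or has $\gamma\ge \gamma_*>1$. Here $\gamma_*=\gamma_*(\Omega)$ is the next homogeneity. We fix $\alpha<\min(\gamma_*-1,1)$.

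\textbf{Step 1 (homogeneous case).} Let $h\in W^{1,2}(\Omega\cap B_1)$ be Neumann harmonic, i.e. solve \eqref{eqn:eqn} with $f=0$. Expand $h$ in the orthonormal eigenbasis $\{\phi_i\}$ of the link:
\[
h=\sum_i a_i r^{\gamma_i}\phi_i .
\]
Write $\mathcal L$ for the space of Neumann affine functions, namely constants plus the linear functions satisfying $D_\nu\ell=0$ on $\partial\Omega$. The modes with $\gamma_i\in\{0,1\}$ span exactly $\mathcal L$. Orthogonality of the modes on each sphere then gives, for $\rho<1$,
\[
\inf_{\ell\in\mathcal L}\rho^{-n-1-2}\int_{\Omega\cap B_\rho}|h-\ell|^2\le \rho^{2(\gamma_*-1)}\inf_{\ell\in\mathcal L}\int_{\Omega\cap B_1}|h-\ell|^2 .
\]
This inequality, valid after normalizing by the volume of $\Omega\cap B_1$, is the key decay estimate.

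\textbf{Step 2 (perturbation).} For general $f\in L^\infty$, on each $B_r$ we write $u=h+w$, where $h$ is the Neumann harmonic replacement of $u$. Since $\Omega\cap B_r$ is a Lipschitz convex domain, the energy estimate and Poincaré inequality bound $\int_{\Omega\cap B_r}|w|^2$ by $C r^{n+1+4}\|f\|_\infty^2$. A standard iteration of Step 1 then gives
\[
\inf_{\ell\in\mathcal L}r^{-n-3}\int_{\Omega\cap B_r}|u-\ell|^2\le C r^{2\alpha}\bigl(\|u\|^2_{L^2}+\|f\|^2_\infty\bigr).
\]
The optimal $\ell_r$ converge to some $\ell_0$ at rate $r^{\alpha}$. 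This yields $C^{1,\alpha}$ control of $u$ at the vertex $0$ in the Campanato sense.

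\textbf{Step 3 (other base points).} We must also control $u$ at points $x\neq0$ of $\overline\Omega$, uniformly. Near such an $x$, the domain $\Omega$ coincides in $B_{c|x|}(x)$ with its tangent cone there. That cone is again a convex polyhedral cone, of the form $\Omega'\times\R^{l}$ with $l\ge1$.

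We induct on the dimension of the non-split factor. The base cases are interior points and half-spaces, where even reflection reduces matters to interior Schauder/$W^{2,p}$ theory. For a split cone $\Omega'\times\R^l$, the link is still convex, so the argument of Steps 1--2 applies verbatim. In fact it applies to every point on the edge $\{0\}\times\R^l$ at once, and uniformly, because the cone has finitely many faces. Rescaling $B_{c|x|}(x)$ to unit size and applying the inductive estimate, we get $C^{1,\alpha}$ bounds on balls comparable to their distance from the next-lower stratum. Since the cone structure is dilation invariant, these pieces fit together by the usual Whitney-covering argument. Combining the vertex decay of Step 2 with these scaled estimates at intermediate scales yields the global $C^{1,\alpha}(\Omega\cap B_{1/2})$ bound.

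The main obstacle we expect is Step 3: making the constants uniform as base points approach lower-dimensional strata. The plan is to prove, inductively over strata, the decay estimate of Step 2 centered at every point of $\overline\Omega\cap B_{1/2}$, with constants depending only on the finitely many tangent cones. We would then convert this family of Campanato estimates into Hölder continuity of $Du$ in the standard way.
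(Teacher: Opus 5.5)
Your argument is sound in outline, but it is not the paper's route. The paper does not prove this statement at all; it imports it from Nazarov--Plamenevsky. Their general theory of elliptic problems in domains with piecewise smooth boundary (weighted spaces, operator pencils along edges and at vertices) handles variable coefficients, systems, and curvilinear non-convex polyhedra.

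What you propose is the paper's own Section~3 mechanism for $(P_\Omega)\Rightarrow(Q_\Omega)$, run one derivative lower:
the classical convexity gap ($\mu_1\ge n$, with equality only for linear functions) replaces $(P_\Omega)$;
your class $\mathcal L$ replaces the quadratic approximants;
decay at spine points $\{0\}\times\R^l$ is uniform by translation invariance;
off-spine points are treated by downward induction on the symmetry dimension, as in Lemmas~\ref{thm:recenter-decay}, \ref{thm:induct-reg} and Theorem~\ref{thm:P-implies-Q}.

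The real technical difference is the perturbation step. The paper (Lemmas~\ref{lem:inhomog-soln}, \ref{lem:radial-decay}) builds a particular solution from the radial ODE. It then uses the present theorem to convert $L^2$ bounds into pointwise ones and to remove the affine modes. Your harmonic replacement and Campanato iteration work purely at the $L^2$ level, with $f\in L^\infty$ and no prior regularity. So they can be used here without circularity; the Bochner argument behind the gap on a nonsmooth convex link needs only the classical $H^2$ regularity on convex domains.

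The price is twofold. You must appeal to Campanato's characterization of $C^{1,\alpha}$ on domains satisfying $|\Omega\cap B_\rho(x)|\ge c\rho^{n+1}$. You also obtain any $\alpha<\min\{1,\gamma_*(\Omega')-1\}$ over the finitely many tangent cones $\Omega'$, rather than the full weighted theory.

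One imprecision: $\Omega$ does not agree with its tangent cone at $x$ on $B_{c|x|}(x)$ for a uniform $c$. The admissible radius is $r(x/|x|)\,|x|$, which degenerates as $x/|x|$ approaches another face of the link. This is harmless if you follow the paper:
take the inductive hypothesis to be a full estimate on $B_{1/2}$ for each more-symmetric tangent cone;
extract a finite cover $\{B_{r_i/2}(x_i)\}$ of the compact set $\Omega_{\chi,0}\cap\{1/2\le|z|\le1\}$.
Since $B_{r_i/2}(x_0)\subset B_{r_i}(x_i)$ whenever $x_0\in B_{r_i/2}(x_i)$, and $\min_i r_i>0$, the constants are uniform. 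Your closing paragraph essentially describes this.
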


Define the norm
\begin{align*}
|||f|||_{m, \Omega}^2 &= \int_0^1 r^{-1-2m} ||f(r \cdot)||^2_{L^2(\Omega \cap \del B_r)} dr .
\end{align*}

\begin{lemma}[Existence of inhomogeneous solutions]\label{lem:inhomog-soln}
Take $m \not \in \{ \gamma_i \}_i$, and $f \in L^2(\Omega \cap B_1)$ with $|||f|||_{m, \Omega} < \infty$.  Then we can find a $u \in W^{1,2}(\Omega \cap B_1)$ solving
\[
\Delta u = f \text{ in } \Omega \cap B_1, \quad D_\nu u = 0 \text{ on } \del\Omega \cap B_1
\]
in the distributional sense (i.e. so that \eqref{eqn:eqn} holds) and admitting the bound
\begin{equation}\label{eqn:inhomog-concl2}
    |||u|||_{m+2, \Omega} \leq c(\Omega, m) |||f|||_{m, \Omega} .    
\end{equation}

In particular we have
\[
r^{-(n+1)} \int_{\Omega \cap B_r} u^2 \leq r^{2(m+2)} |||u|||_{m+2, \Omega}^2 \leq c(\Omega, m) r^{2(m+2)} |||f|||_{m, \Omega}^2 \quad \forall 0 < r < 1.
\]
\end{lemma}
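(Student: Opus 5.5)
We construct $u$ by separation of variables in polar coordinates on the cone, expanding in the Neumann eigenbasis $\{\phi_i\}$ of the link $\Omega\cap S^n$. Write
\[
f(r\theta)=\sum_i f_i(r)\phi_i(\theta),\qquad u(r\theta)=\sum_i u_i(r)\phi_i(\theta).
\]
The Neumann problem then reduces to the radial ODEs
\[
u_i'' + \frac{n}{r}u_i' - \frac{\mu_i}{r^2}u_i = f_i,
\]
with indicial roots $\gamma_i$ and $\gamma_i^-=-(n-1)-\gamma_i$. Passing to $t=-\log r$ and $u_i = r^{m+2}v_i(t)$ (similarly $f_i=r^m g_i(t)$) turns each ODE into a constant-coefficient equation
\[
v_i'' - a\,v_i' + \bigl((m+2)(m+1+n)-\mu_i\bigr)v_i = g_i .
\]
Its characteristic roots are $m+2-\gamma_i$ and $m+2-\gamma_i^-$, shifted appropriately; in any case they are nonzero real numbers precisely because $m\notin\{\gamma_i\}$, where $\gamma_i$ runs over the admissible homogeneities. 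We may regard $m+2$ as the relevant exponent, as the statement intends; the only properties used are that both roots are nonzero and that they stay a uniform distance from $0$.

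\textbf{Step 1: solving each mode.} For each mode we solve on the half-line $t\in(0,\infty)$ by the Green's function that integrates each exponential mode in the stable direction: forward for positive roots, backward for negative roots. Extending $g_i$ by zero for $t<0$, the result is a convolution on $\R$ with a kernel that decays exponentially. Plancherel then gives
\[
\|v_i\|_{L^2(dt)} \le \frac{\|g_i\|_{L^2(dt)}}{\inf_{\xi\in\R}|p_i(i\xi)|},
\]
where $p_i$ is the characteristic polynomial. Note that $\sum_i \|g_i\|^2_{L^2(dt)} = |||f|||^2_{m,\Omega}$, up to the normalization of $dr/r$ versus $dt$. Similarly $\sum_i\|v_i\|^2=|||u|||^2_{m+2,\Omega}$ once the $r^{n}$ area factor is absorbed into the definition of $|||\cdot|||$; I would check this bookkeeping carefully.

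\textbf{Step 2: uniform symbol bound (main obstacle).} The key point is a lower bound
\[
|p_i(i\xi)| \ge c(\Omega,m)>0
\]
uniform in $i$ and $\xi$. For large $\mu_i$ we have $|p_i(i\xi)|\ge \mu_i - C$, which even gives a gain. For the finitely many small $\mu_i$, the bound holds because no root lies on the imaginary axis, which is exactly the hypothesis $m\notin\{\gamma_i\}$. This produces \eqref{eqn:inhomog-concl2}.

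\textbf{Step 3: weak formulation and $W^{1,2}$.} We must verify that the series $u$ lies in $W^{1,2}(\Omega\cap B_1)$ and satisfies \eqref{eqn:eqn} for test functions $\zeta\in C^1_c(B_1)$, which need not vanish on $\partial\Omega$. Each term $u_i\phi_i$ satisfies the weak Neumann identity, since $\phi_i$ satisfies it on the link for all $\zeta\in C^1(S^n)$. The derivative bounds follow from the same ODE estimates applied to $v_i'$ and $\sqrt{\mu_i}\,v_i$, because the symbol bound controls $|\xi|+\sqrt{\mu_i}$. The energy is then finite on $B_1$ minus small balls. Near $0$, $W^{1,2}$ follows from $m+2>1-\frac{n-1}{2}$ together with $r^{m+1}$-type weighted control; if $m$ is very negative this may fail, but the intended use has $m\ge 0$.

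\textbf{Step 4: the corollary.} The final inequality follows from
\[
r^{-(n+1)}\int_{B_r} u^2 = r^{-(n+1)}\int_0^r \|u(s\cdot)\|^2 \, ds \le r^{2(m+2)}\int_0^r s^{-1-2(m+2)}\|u(s\cdot)\|^2\, ds
\]
after matching normalizations: since $s\le r$, we have $s^{-1-2(m+2)}\ge r^{-1-2(m+2)}$, and the remaining powers of $s$ are accounted for by the area factor in $\|\cdot\|_{L^2(\partial B_s)}$.
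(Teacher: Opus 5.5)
Your proposal is correct. You construct the solution essentially as the paper does, but you prove the estimate differently.

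\textbf{Construction.} Both you and the paper expand in the Neumann eigenbasis and invert each radial ODE mode by mode. The paper's explicit formula is $a_i(r)=r^{\gamma_i}\int_{r_i}^r s^{-n-2\gamma_i}\int_0^s t^{n+\gamma_i}f_i(t)\,dt\,ds$, with $r_i=0$ if $\gamma_i<m+2$ and $r_i=1$ if $\gamma_i>m+2$. This makes exactly your stable-direction choices: the inner integral from $r=0$ inverts the root $m+2-\gamma_i^->0$. So your whole-line solution agrees with the paper's up to harmless multiples of $r^{\gamma_i}\phi_i$ with $\gamma_i>m+2$. The same case split confirms your reading that the hypothesis must be $m+2\notin\{\gamma_i\}$.

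\textbf{Estimate.} The paper imports a pointwise ODE computation giving $\sum_i(1+\mu_i)a_i(r)^2+r^2a_i'(r)^2\le c\,r^{2(m+2)}|||f|||_{m,\Omega}^2$ for every $r$. You instead pass to $t=-\log r$ and use Plancherel with the explicit symbol bound $\inf_\xi|p_i(i\xi)|=|p_i(0)|=|\mu_i-(m+2)(m+n+1)|$. This is bounded below by $c(\Omega,m)(1+\mu_i)$ uniformly in $i$, so it also controls $\dot v_i$ and $\sqrt{\mu_i}\,v_i$.

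\textbf{What each route buys.} Yours is self-contained and gives \eqref{eqn:inhomog-concl2} literally. The paper's pointwise bound as stated does not do so by itself, since integrating it against $r^{-1-2(m+2)}$ produces $\int_0^1 r^{-1}\,dr$. The paper's route instead gives decay of $u$ on every sphere. That yields the $L^2(B_r)$ decay used in Lemma \ref{lem:radial-decay} directly, without passing through $|||\cdot|||$.

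\textbf{Bookkeeping in Step 4.} This closes once $\|f(r\cdot)\|$ in the definition of $|||\cdot|||$ is read as the $L^2$ norm on the unit link $\Omega\cap\partial B_1$. With that reading, $\sum_i\|v_i\|^2_{L^2(dt)}=|||u|||^2_{m+2,\Omega}$ exactly. You then only need $r^{-(n+1)}s^n\le r^{2(m+2)}s^{-1-2(m+2)}$ for $0<s\le r$, which holds because $n+1+2(m+2)>0$.
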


\begin{proof}
We seek a solution to \eqref{eqn:eqn} of the form
\begin{equation}\label{eqn:inhomog-soln-1}
u = \sum_{i=0}^\infty a_i(r) \phi_i(\theta).
\end{equation}
Writing
\[
f_i(r) = \int_{\Omega \cap \del B_1} f(r \theta) \phi_i(\theta) d\theta
\]
then the $a_i(r)$ must solve (distributionally)
\begin{align}
f_i(r) &= a_i''(r) + nr^{-1} a_i(r) - r^{-2}  \mu_i a_i(r) \label{eqn:a-ode}\\
&\equiv r^{-n-\gamma_i}( r^{n+2\gamma_i} (r^{-\gamma_i} a(r))')' . \nonumber
\end{align}

Let us define
\[
a_i(r) = r^{\gamma_i} \int_{r_i}^r s^{-n-2\gamma_i} \left( \int_0^s t^{n+\gamma_i} f_i(t) dt \right) ds
\]
for $r_i = 1$ if $\gamma_i > 2+m$ and $r_i = 0$ if $\gamma_i < 2+m$.  By a similar computation as in \cite[Lemma 4.1]{edelen2026entireareaminimizingsurfacesmathbfrn}, we get
\begin{gather*}
|a_i(r)|^2 + \frac{r^2}{(\gamma_i+1)^2} |a'_i(r)|^2 \leq \frac{c(\Omega, m)}{(\gamma_i+1)^3}  r^{2(2+m)} \int_0^1 t^{-1-2m} f_i(t)^2 dt 
\end{gather*}
for every $0 < r < 1$, from which it follows
\begin{equation}\label{eqn:inhomog-soln-2}
\sum_{i=0}^\infty (1+\mu_i) |a_i(r)|^2 + r^{2} |a_i'(r)|^2 \leq c(\Omega, m) r^{2(2+m)} |||f|||_{m, \Omega}^2 \quad \forall 0  < r < 1.
\end{equation}
Using \eqref{eqn:inhomog-soln-2} and the orthogonality of the Neumann eigenfunctions $\phi_i$, it's straightforward to show that
\begin{align*}
\int_{\Omega \cap B_1} \left| \sum_{i=0}^N a_i \phi_i \right|^2 + \left| \sum_{i=0}^N D(a_i \phi_i) \right|^2  dx
&= \int_{\Omega \cap B_1} \sum_{i=0}^N (a_i \phi_i)^2 + \sum_{i=0}^N |D(a_i \phi_i)|^2 dx \\
&\leq \int_0^1 \left( |a_i'(r)|^2 + (1+r^{-2} \mu_i) |a_i(r)|^2 \right) r^{n} dr \\
&\leq c(\Omega, m) |||f|||_{m, \Omega}^2
\end{align*}
for all natural numbers $N$, and hence $u_N(x = r\theta) := \sum_{i=0}^N a_i(r) \phi_i(\theta)$ conveges in $W^{1,2}(\Omega \cap B_1)$ to some function $u$ satisfying the estimate \eqref{eqn:inhomog-concl2}.

On the other hand, if we let $f_N = \sum_{i=0}^N f_i(r) \phi_i(\theta)$, then from the definition of $a_i$, each $u_N$ solves
\[
\int_\Omega D u_N \cdot D\zeta = - \int_\Omega f_N \zeta \quad \forall \zeta \in C^1_c(B_1)  ,
\]
which from our aforestated convergence implies $u$ solves \eqref{eqn:eqn}.
\end{proof}

The property $(P_\Omega)$ implies the following radial decay estimate.
\begin{lemma}[centered $C^{2,\beta}$ decay]\label{lem:radial-decay}
Let $u \in W^{1,2}(\Omega \cap B_1)$ solve \eqref{eqn:eqn} with $f \in C^\alpha(\Omega \cap B_1)$ for $\alpha \in (0, 1)$, and suppose $(P_\Omega)$ holds.  Suppose also that $u(0) = 0$, $Du|_0 = 0$.

Then there is a $\beta(\Omega, \alpha) \in (0, 1)$ and a homogenous degree-2 polynomial $q(x) = q_{ij} x_i x_j$ solving \eqref{eqn:eqn} with $f(0)$ in place of $f$ so that
\begin{equation}\label{eqn:radial-decay-concl1}
|u - q|_{C^0(\Omega \cap B_r)} \leq c(\Omega, \alpha) r^{2+\beta} (||u||_{L^2(\Omega \cap B_1)} + |f|_{C^\alpha(\Omega \cap B_1)}) \quad \forall 0 < r < 1/2,
\end{equation}
and
\begin{equation}\label{eqn:radial-decay-concl2}
|q_{ij}| \leq c(\Omega, \alpha) (||u||_{L^2(\Omega \cap B_1)} + |f|_{C^\alpha(\Omega \cap B_1)}).
\end{equation}
In particular, $u$ is twice-differentiable at $0$ with $D^2_{ij} u|_0 = q_{ij}$.
\end{lemma}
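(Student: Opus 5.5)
Fix $f$ and write $f = f(0) + (f - f(0))$. The plan is to strip off from $u$ an explicit particular solution, then handle the rest by expanding in Neumann eigenfunctions of the link and reading the radial exponents off $(P_\Omega)$.

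For the constant part, take $q_0 = \frac{f(0)}{2(n+1)}|x|^2$. Since $D_\nu |x|^2 = 2x\cdot\nu = 0$ on the faces of the cone, $q_0$ solves \eqref{eqn:eqn} with right-hand side $f(0)$.

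For the Hölder part, set $g = f - f(0)$. It satisfies $|g(x)| \le [f]_\alpha |x|^\alpha$, so $|||g|||_{m,\Omega}<\infty$ for every $m<\alpha$. Fix $m = \alpha' \in (0,\alpha)$ with $2+\alpha' \notin \{\gamma_i\}$, and apply Lemma~\ref{lem:inhomog-soln} to obtain $v$ with $\Delta v = g$. It satisfies the bound
\[
r^{-(n+1)}\int_{\Omega\cap B_r} v^2 \le c\, r^{2(2+\alpha')}\,[f]_\alpha^2 .
\]

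Now consider $w = u - q_0 - v$, which is a Neumann-harmonic function in $W^{1,2}(\Omega\cap B_1)$. Expand it as
\[
w = \sum_i \bigl(c_i r^{\gamma_i} + d_i r^{-\gamma_i - n+1}\bigr)\phi_i .
\]
Finite energy at the origin kills the $d_i$ terms; more carefully, one solves the ODE \eqref{eqn:a-ode} with zero right-hand side and uses the $W^{1,2}$ condition.

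The exponents are then controlled by $(P_\Omega)$. The only exponents at most $2$ are $0$, $1$ (linear functions) and $2$ (quadratic polynomials), and the next exponent is some $\gamma_* > 2$. Hence
\[
w = a + \ell(x) + q_1(x) + w_{>2}, \qquad \int_{\partial B_r\cap\Omega} w_{>2}^2 \le r^{2\gamma_*} \int_{\partial B_1\cap\Omega} w^2 .
\]
Here the coefficients are bounded by $\|w\|_{L^2(B_1)}$, which is in turn bounded by $\|u\|_{L^2}+|f|_{C^\alpha}$.

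Choose $\beta = \min(\alpha', \gamma_*-2)/2$. Then
\[
u - q_0 - q_1 - a - \ell = v + w_{>2}
\]
has $L^2$ average on $B_r$ of size $O(r^{2+2\beta})$.

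It remains to show $a = 0$ and $\ell = 0$. By Theorem~\ref{thm:C1alpha-reg} we have $u\in C^{1,\alpha}$, and $u(0)=0$, $Du(0)=0$. The remainder $v + w_{>2}$ has $L^2$ averages decaying faster than $r^{1+\alpha}$. So $a + \ell$ also has averages $o(r)$, and therefore $a = 0$ and $\ell = 0$. Set $q = q_0 + q_1$. This $q$ solves \eqref{eqn:eqn} with $f(0)$, and its coefficient bound \eqref{eqn:radial-decay-concl2} follows from the coefficient bounds above.

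The main obstacle is upgrading the $L^2$ decay to the pointwise bound \eqref{eqn:radial-decay-concl1}. I would apply Theorem~\ref{thm:C1alpha-reg} rescaled to $B_r$, using the dilation invariance of the cone, to the function $h = u - q$, which solves $\Delta h = f - f(0)$. On $B_{2r}$ we have $\|f - f(0)\|_{L^\infty(B_{2r})} \le c r^\alpha$. Rescaling $h_r(x) = h(rx)/r^{2+\beta}$ gives
\[
\|h_r\|_{L^2(B_2)} \le C, \qquad \|\Delta h_r\|_{L^\infty(B_2)} \le r^{\alpha-\beta}\,C \le C,
\]
so $\sup_{B_1}|h_r| \le C$, which is \eqref{eqn:radial-decay-concl1}. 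Twice-differentiability at $0$ with $D^2u|_0 = q_{ij}$ (up to the factor-$2$ convention) is then immediate.

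A secondary technical point is justifying the term-by-term expansion of $w$ and the orthogonality estimates. These follow exactly as in the proof of Lemma~\ref{lem:inhomog-soln}.
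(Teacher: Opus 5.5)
Your proposal is correct and follows essentially the same route as the paper. You subtract $\frac{f(0)}{2(n+1)}|x|^2$, produce a particular solution for $f-f(0)$ via Lemma~\ref{lem:inhomog-soln}, and expand the Neumann-harmonic remainder in link eigenfunctions, using $(P_\Omega)$ to isolate a quadratic $\gamma=2$ part with a spectral gap above it. You then upgrade $L^2$ decay to $C^0$ decay by the rescaled Theorem~\ref{thm:C1alpha-reg}. The differences are cosmetic: you remove the constant and linear modes after the expansion using $u(0)=0$, $Du|_0=0$, whereas the paper kills all modes with $\gamma_i<2$ at once from the pointwise bound $|w|\le C'r^{1+\alpha}$; and you correctly read the non-resonance condition in Lemma~\ref{lem:inhomog-soln} as $2+m\notin\{\gamma_i\}$.
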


\begin{proof}
Set $v = u - \frac{1}{2(n+1)} f(0) |x|^2$, and then $v$ (distributionally) solves \eqref{eqn:eqn} with $f - f(0)$ in place of $f$.  By Theorem \ref{thm:C1alpha-reg} and our assumptions on $u$ and $f$, after shrinking $\alpha(\Omega)$ as necessary, we have the bounds
\begin{gather}\label{eqn:radial-decay-1}
|v|_{C^0(\Omega \cap B_r)} \leq r^{1+\alpha} C \quad \forall 0 < r < 1/2, \\
\text{ and } |f - f(0)|_{C^0(\Omega \cap B_r)} \leq r^\alpha [f]_{\alpha, \Omega \cap B_{1}} \quad \forall 0 < r < 1 ,
\end{gather}
for some constant $C$ independent of $r$.

Fix $0 < m < \alpha/2$ so that $m \not \in \{ \gamma_i \}_i$, and apply Lemma \ref{lem:inhomog-soln} to find a $v_0 \in W^{1,2}(\Omega \cap B_1)$ solving \eqref{eqn:eqn} with $f - f(0)$ in place of $f$, and admitting the bound
\begin{align}
r^{-(n+1)} \int_{\Omega \cap B_r} v_0^2 
&\leq c r^{2(m+2)} |||f - f(0)|||_{m, \Omega} \nonumber \\
& \leq c(\Omega, \alpha) r^{2(m+2)} [f]_{\alpha, \Omega \cap B_1} \quad \forall 0 < r < 1. \label{eqn:radial-decay-1.5}
\end{align}
It then follows from Theorem \ref{thm:C1alpha-reg} that
\begin{equation}\label{eqn:radial-decay-2}
|v_0|_{C^0(\Omega \cap B_r)} \leq c(\Omega, \alpha) r^{m+2} [f]_{\alpha, \Omega \cap B_1} \quad \forall 0 < r < 1/2.
\end{equation}

Set $w = v - v_0$.  Then $w \in W^{1,2}(\Omega \cap B_1)$ (distributionally) solves \eqref{eqn:eqn} with $0$ in place of $f$, and by \eqref{eqn:radial-decay-1}, \eqref{eqn:radial-decay-2} admits the bound
\begin{equation}\label{eqn:radial-decay-3}
|w|_{C^0(\Omega \cap B_r)} \leq r^{1+\alpha} C' \quad \forall 0 < r < 1/2.
\end{equation}
Define
\begin{equation}\label{eqn:radial-decay-3.5}
a_i(r) = \int_{\Omega \cap \del B_1} w(r \theta) \phi_i(\theta) d\theta,
\end{equation}
and then using the ODE \eqref{eqn:a-ode}, the bound $|a_i(r)| \leq c(n) C' r^{1+\alpha}$, and the property $(P_\Omega)$ we deduce
\[
a_i(r) = 0 \text{ if } \gamma_i < 2, \quad a_i(r) = A_i r^{\gamma_i} \text{ if } \gamma_i \geq 2 \text{ for } A_i \in \R.
\]

Using again property $(P_\Omega)$, we can therefore expand in $W^{1,2}(\Omega \cap B_1)$
\begin{align*}
w(x = r \theta) 
&= \sum_{\gamma_i = 2} A_i r^2 \phi_i(\theta) + \sum_{\gamma_i \geq 2+\beta'} A_i r^{\gamma_i} \phi_i(\theta) \\
&= a_{ij} x_i x_j + \sum_{\gamma_i \geq 2+ \beta'} A_i r^{\gamma_i} \phi_i(\theta) 
\end{align*}
for some $\beta'(\Omega) > 0$.  We deduce
\begin{align}
r^{-(n+1)} \int_{\Omega \cap B_r} |w - a_{ij} x_i x_j|^2 
&= r^{-(n+1)} \int_0^r \sum_{\gamma_i \geq 2+\beta'} A_i^2 s^{2\gamma_i + n} ds \nonumber \\
&\leq r^{2(2+\beta')} \int_{\Omega \cap B_1} w^2 \label{eqn:radial-decay-4}
\end{align}
for every $r < 1$.  If we define $q_{ij} = a_{ij} + \frac{1}{2(n+1)} f(0) \delta_{ij}$ and $\beta = \min \{ \beta', m \}$, then by combining Theorem \ref{thm:C1alpha-reg}, \eqref{eqn:radial-decay-4}, \eqref{eqn:radial-decay-1.5} we have
\begin{align*}
|u - q_{ij} x_i x_j|_{C^0(\Omega \cap B_r)}
&\leq |w - a_{ij} x_i x_j|_{C^0(\Omega \cap B_r)} + |v_0|_{C^0(\Omega \cap B_r)} \\
&\leq c r^{2+\beta'} ||w||_{L^2(\Omega \cap B_1)} + c r^{2+m} [f]_{\alpha, \Omega \cap B_1} \\
&\leq c(\Omega, \alpha) r^{2 + \beta} ( ||u||_{L^2(\Omega \cap B_1)} + |f|_{C^\alpha(\Omega \cap B_1)}).
\end{align*}
which is the desired conclusion \eqref{eqn:radial-decay-concl1}.  The bound \eqref{eqn:radial-decay-concl2} on $q_{ij}$ follows directly from \eqref{eqn:radial-decay-3.5}, \eqref{eqn:radial-decay-1.5}.
\end{proof}

Given $\Omega = \Omega_0 \times \R^l$ which is $l$-symmetric, and given $\chi > 0$, define the wedge-type domain
\[
\Omega_{\chi, y} = \{ x = (z, y) \in \Omega_0 \times \R^l : |y - y'| \leq \chi |z| \}.
\]

Let us note that for any $x = (z, y) \in \Omega \setminus \{0 \} \times \R^l$, there is a radius $r(x) \equiv r(z/|z|)$ so that $\Omega \cap B_{r |z|}(x)$ is an $l'$-symmetric polyhedral domain (centered at $x$) for some $l' > l$.  In particular, for any $\chi >0$, one can find a finite collection of such balls so that the collection $\{ B_{r_i(x_i)/2}(x_i) \}_i$ covers $\Omega_{\chi, 0} \cap \{ 1/2 \leq |z| \leq 1 \}$, and the balls $\{ B_{r_i}(x_i)\}_i$ lie in $\Omega_{2\chi, 0} \cap \{ 1/4 \leq |z| \leq 2 \}$.

\begin{lemma}\label{thm:recenter-decay}
Let $u \in W^{1,2}(\Omega \cap B_1)$ solve \eqref{eqn:eqn} with $f \in C^\alpha(\Omega \cap B_1)$.  Suppose $(P_\Omega)$.  Suppose additionally that $\Omega = \Omega_0 \times \R^l$ is $l$-symmetric, and $(Q_{\Omega'})$ holds for all polyhedral cone domains $\Omega' \subset \R^{n+1}$ which are $l'$-symmetric for $l' > l$.\footnote{Really it suffices to assume $(Q_{\Omega'})$ holds for every tangent cone $\Omega' = T_x \Omega$, for $x \not\in \{0\}\times \R^l$} Fix $\chi \geq 1$.

Then there is a $\beta(\Omega, \alpha) > 0$ so that for every $y \in B^l_{1/2}$, we can find a degree $2$-polynomial $q|_y(x)= q_{ij}(y) (x_i - y_i)(x_j - y_j)$ solving \eqref{eqn:eqn} with $f(y)$ in place of $f$, so that
\begin{equation}\label{eqn:recenter-decay-concl1}
|q_{ij}(y)| \leq c(\Omega) (||u||_{L^2(\Omega \cap B_1)} + |f|_{C^\alpha(\Omega \cap B_1)}),
\end{equation}
and for all $0 < r < 1/8$ we have the estimates
\begin{equation}\label{eqn:recenter-decay-concl2}
|D^2_{ij} u - q_{ij}(y)|_{C^0(\Omega_{\chi, y} \cap B_r(y))} \leq c(\Omega, \chi) r^\beta ( ||u||_{L^2(\Omega \cap B_1)} + |f|_{C^\alpha(\Omega \cap B_1)})
\end{equation}
and
\begin{equation}\label{eqn:recenter-decay-concl2.5}
[D^2_{ij} u]_{\beta, \Omega_{\chi, y} \cap \{ r/2 \leq |z| \leq r \}} \leq c(\Omega, \chi) ( ||u||_{L^2(\Omega \cap B_1)} + |f|_{C^\alpha(\Omega \cap B_1)}).
\end{equation}
In particular, $y \in B^l_{1/2} \mapsto q_{ij}(y)$ is $C^\beta$, with the estimate
\begin{equation}\label{eqn:recenter-decay-concl3}
|q_{ij}|_{C^\beta(B_{1/2}^l)} \leq c(\Omega) (||u||_{L^2(\Omega \cap B_1)} + |f|_{C^\alpha(\Omega \cap B_1)}).
\end{equation}
\end{lemma}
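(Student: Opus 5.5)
Fix $y\in B^l_{1/2}$ and set $E := ||u||_{L^2(\Omega\cap B_1)} + |f|_{C^\alpha(\Omega\cap B_1)}$. The plan is to first get the polynomial approximation at $y$ from Lemma \ref{lem:radial-decay}, and then upgrade the resulting $C^0$ decay to $C^2$ decay on the wedge $\Omega_{\chi,y}$ by rescaling and applying $(Q_{\Omega'})$ on the tangent cones of $\Omega$ away from the spine.

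\textbf{Step 1: polynomial at $y$.} Since $\Omega$ is invariant under translations in $\R^l$, the translate $u_y(x) := u(x+y)$ solves \eqref{eqn:eqn} in $\Omega\cap B_{1/2}$. By Theorem \ref{thm:C1alpha-reg}, $u$ is $C^{1,\alpha}$ with $|u(y)|+|Du(y)|\le cE$. Subtract the affine part $u(y)+Du(y)\cdot(x-y)$. The constant is harmless. The linear part solves \eqref{eqn:eqn} with $f=0$ only if its $z$-component vanishes; this holds because a $C^{1}$ Neumann solution on the $0$-symmetric factor $\Omega_0$ must have $D_zu(y)=0$ at the vertex, as the normals of the faces of $\Omega_0$ span $\R^{n+1-l}$. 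Apply Lemma \ref{lem:radial-decay} (after rescaling $B_{1/2}$ to $B_1$) to the remainder. This gives $q|_y$ solving \eqref{eqn:eqn} with $f(y)$, the bound \eqref{eqn:recenter-decay-concl1}, and
\[
|u - u(y) - Du(y)\cdot(x-y) - q|_y|_{C^0(\Omega\cap B_r(y))}\le c\,r^{2+\beta}E .
\]

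\textbf{Step 2: upgrade on the wedge.} Set $w := u - u(y) - Du(y)\cdot(x-y) - q|_y$, which solves \eqref{eqn:eqn} with $f-f(y)$. Take a point $x=(z,y')\in\Omega_{\chi,y}$ with $|z|\sim\rho$, so that $|x-y|\le c(\chi)\rho$. Rescale by $\rho$: let $\tilde w(\xi) := \rho^{-2}w(y+\rho\xi)$, which solves \eqref{eqn:eqn} with right-hand side $\tilde f(\xi) = f(y+\rho\xi)-f(y)$. Then $|\tilde w|_{C^0}\le c\rho^{\beta}E$ and $|\tilde f|_{C^\alpha}\le c\rho^\alpha E$ on the region $\Omega_{2\chi,0}\cap\{1/4\le|z|\le 2\}$, intersected with a bounded ball.

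Cover $\Omega_{\chi,0}\cap\{1/2\le |z|\le 1\}$ by the finitely many balls $B_{r_i/2}(x_i)$ from the preceding paragraph of the paper. In each $B_{r_i}(x_i)$ the domain is an $l'$-symmetric cone with $l'>l$, so $(Q_{\Omega'})$ applies there. It gives $|D^2\tilde w|_{C^{\beta}(B_{r_i/2}(x_i))}\le c(\Omega,\chi)\rho^{\min(\alpha,\beta)}E$. Scaling back yields $|D^2u - D^2q|_y|\le c\rho^\beta E$ at $x$, since the Hessian of the affine part vanishes. Because $\rho\le|x-y|$, this gives \eqref{eqn:recenter-decay-concl2}. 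The Hölder seminorm bound \eqref{eqn:recenter-decay-concl2.5} on the annuli $\{r/2\le|z|\le r\}$ follows in the same way: the scaled seminorm picks up a factor $\rho^{-\beta}$, which is compensated by the $\rho^\beta$ smallness, after shrinking $\beta$ so that $\beta\le\alpha$ and $\beta$ is at most the exponent from $(Q_{\Omega'})$. Finitely many cone types occur, so the constants are uniform.

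\textbf{Step 3: Hölder continuity of $q_{ij}(y)$.} For $y_1,y_2\in B^l_{1/2}$ with $d=|y_1-y_2|$, choose a point $x=(z,y_1)$ with $|z|=d$. Then $x$ lies in both $\Omega_{\chi,y_1}$ and $\Omega_{\chi,y_2}$ for $\chi\ge1$, and $|x-y_k|\le 2d$. Applying \eqref{eqn:recenter-decay-concl2} twice gives $|q_{ij}(y_1)-q_{ij}(y_2)|\le c\,d^\beta E$, which is \eqref{eqn:recenter-decay-concl3}.

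\textbf{Main obstacle.} The delicate step is Step 2: making the rescaled application of $(Q_{\Omega'})$ uniform. This requires the covering balls to stay inside the domain of $w$ after rescaling, with $r_i$ depending only on $z/|z|$ within a compact set. It also requires checking that the $C^0$ smallness of $\tilde w$ controls its $L^2$ norm on each ball, which is what $(Q_{\Omega'})$ takes as input.
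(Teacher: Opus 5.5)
Your proposal is correct and follows the paper's proof: subtract the affine part at $(0,y)$ and apply Lemma \ref{lem:radial-decay}, then rescale and apply $(Q_{\Omega'})$ on the finite cover of the annular wedge to upgrade the $C^0$ decay to \eqref{eqn:recenter-decay-concl2} and \eqref{eqn:recenter-decay-concl2.5}, and finally compare $q_{ij}(y_1)$ with $q_{ij}(y_2)$ on a region common to both wedges. The only differences are cosmetic: you evaluate at a single point $(z,y_1)$ with $|z|=d$ (valid because Step 2 makes $D^2u$ continuous off the spine), whereas the paper averages over $\Omega_{\chi,y'}\cap\Omega_{\chi,y''}\cap B_{10d}$; and you state explicitly that $D_zu(0,y)=0$, which the paper uses tacitly when it subtracts only the $y$-linear part.
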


\begin{proof}
For ease of notation write $\Lambda = ||u||_{L^2(\Omega \cap B_1)} + |f|_{C^\alpha(\Omega \cap B_1)}$.  Let $\{ B_{r_i}(x_i) : x_i = (z_i, y_i)\}_i$ be the finite cover from above.  Fix $y' \in B^l_{1/2}$.  We know from Theorem \ref{thm:C1alpha-reg} that $u$ is $C^{1, \alpha}$, and so we can apply Lemma \ref{lem:radial-decay} in $B_{1/2}(0, y')$ to the function $v(z, y) = u - u(0, y') - \sum_{i=1}^l (y_i - y'_i) D_{y_i} u|_{(0, y')}$ to deduce the existence of $q|_{y'}(x) = q_{ij}(y')(x_i - y'_i)(x_j - y'_j)$ which solve \eqref{eqn:eqn} with $f(y')$ in place of $f$, and satisfy
\begin{align*}
|q_{ij}(y')| &\leq c ||v||_{L^2(\Omega \cap B_{1/2}(0, y'))} + c |f|_{C^\alpha(\Omega \cap B_{1/2}(0, y'))}\\
&\leq c ||u||_{L^2(\Omega \cap B_1)} + c |u(y')| + c|Du|_{(0, y')}| + c |f|_{C^\alpha(\Omega \cap B_1)} \\
&\leq c(\Omega) \Lambda,
\end{align*}
and 
\[
|v - q|_{y'}|_{C^0(B_r(0, y'))} \leq c(\Omega) r^{2+\beta} \Lambda \quad \forall r \leq 1/4.
\]

Noting that $v - q|_{y'}$ solves \eqref{eqn:eqn} with $f - f(y')$ in place of $f$, for any radius $r \in (0, 1/8]$, we can apply Property $Q$ (after shrinking $\beta$ as necessary) to the function $v - q|_{y'}$ in the balls $B_{r_i r}(y' + rx_i)$ to deduce the estimate
\begin{align*}
&r^2 |D^2 (v - q|_{y'})|_{C^0(\Omega_{\chi, y'} \cap \{ r/2 \leq z \leq r \})} + r^{2+\beta} [D^2(v - q|_{y'})]_{\beta, \Omega_{\chi, y'} \cap \{ r/2 \leq |z| \leq r \}} \\
&\leq c ||v - q_{y'}||_{L^2(\Omega_{2\chi, y'} \cap \{ r/4 \leq z \leq 2r \})} + c r^2|f - f(y')|_{C^0(\Omega_{2\chi, y'} \cap \{ r/4 \leq z \leq 2r \})} \\
&\quad + c r^{2+\beta} [f]_{\beta, \Omega_{2\chi, y'} \cap \{ r/4 \leq z \leq 2r \}} \\
&\leq c(\Omega, \chi) r^{2+\beta} \Lambda
\end{align*}
Since $D^2_{ij}(v - q|_{y'}) = D^2_{ij} u - q_{ij}(y')$, we get \eqref{eqn:recenter-decay-concl2}.

It remains to prove \eqref{eqn:recenter-decay-concl3}.  Take $y', y'' \in B_{1/2}^l$ with $d := |y' - y''| \leq 1/1000$.  Then since $\chi \geq 1$ we have
\begin{align*}
|\Omega_{\chi, y'} \cap \Omega_{\chi, y''} \cap B_{10 d}( (y' + y'')/2)| 
&\geq |\Omega \cap \{ 4d \leq |z| \leq 8d \} \cap \{ |y - (y' + y'')/2| \leq d \}| \\
&\geq d^{n+1} /c(\Omega).
\end{align*}
Using the above and \eqref{eqn:recenter-decay-concl2}, we can compute
\begin{align*}
&d^{n+1} |q_{ij}(y') - q_{ij}(y'')|^2/c(\Omega) \\
&\leq 2 \int_{\Omega_{\chi, y'} \cap \Omega_{\chi, y''} \cap B_{10 d}((y'+y'')/2)} |q_{ij}(y') - D^2 u_{ij}(x)|^2 + |q_{ij}(y'') - D^2_{ij}u(x)|^2 dx \\
&\leq 2 \int_{\Omega_{\chi, y'} \cap B_{20d}(y')} |q_{ij}(y') - D^2_{ij} u(x)|^2 dx + 2 \int_{\Omega_{\chi, y''} \cap B_{20d}(y'')} |q_{ij}(y'') - D^2_{ij}u(x)|^2 dx \\
&\leq c(\Omega) d^{n+1+2\beta} \Lambda.
\end{align*}
This completes the proof of Lemma \ref{thm:recenter-decay}.
\end{proof}

We are now ready to prove our inductive step.
\begin{lemma}\label{thm:induct-reg}
Let $\Omega = \Omega_0 \times \R^l$ be $l$-symmetric.  Suppose $(P_\Omega)$ holds, and $(Q_{\Omega'})$ holds for every $l'$-symmetric polyhedral cone domain $\Omega'$ for $l' > l$.  Then $(Q_\Omega)$ holds also.
\end{lemma}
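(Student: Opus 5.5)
We use Lemma \ref{thm:recenter-decay} to control $D^2 u$ near the spine $\{0\}\times\R^l$, and the hypothesis $(Q_{\Omega'})$ for $l' > l$ to control it away from the spine. Throughout, write $\Lambda = \|u\|_{L^2(\Omega\cap B_1)} + |f|_{C^\alpha(\Omega\cap B_1)}$ and fix $\chi = 1$. By scaling and translation we reduce to proving a $C^{2,\beta}$ estimate on $\Omega\cap B_{1/16}$ in terms of $\Lambda$. A standard covering argument then extends this to $B_{1/2}$, since balls centered at spine points $(0,y)$ with $|y|<1/2$ have the same structure.

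\emph{Step 1 (pointwise bounds).} Take $x=(z,y)\in\Omega\cap B_{1/16}$.
\begin{itemize}
\item If $z=0$, Lemma \ref{thm:recenter-decay} gives twice-differentiability at $x$ with $D^2u(x)=q_{ij}(y)$ and $|q_{ij}(y)|\le c\Lambda$.
\item If $z\ne0$, then $x\in\Omega_{1,y}$. Applying \eqref{eqn:recenter-decay-concl2} with $r\approx|z|$ gives $|D^2u(x)-q_{ij}(y)|\le c|z|^\beta\Lambda$.
\end{itemize}
Together with \eqref{eqn:recenter-decay-concl3}, this shows $D^2u$ is bounded by $c\Lambda$ and extends continuously to the spine. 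It also yields the $C^0$ part of the estimate; lower order terms come from Theorem \ref{thm:C1alpha-reg}.

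\emph{Step 2 (Hölder seminorm).} Take $x'=(z',y')$ and $x''=(z'',y'')$ with $|z'|\ge|z''|$, and set $d=|x'-x''|$. We split into two cases.
\begin{itemize}
\item \textbf{Case A: $d\ge|z'|/4$.} Both points lie within distance $\le 5d$ of the spine. Then
\[
|D^2u(x')-D^2u(x'')| \le |D^2u(x')-q(y')| + |q(y')-q(y'')| + |q(y'')-D^2u(x'')|.
\]
The outer terms are $\le c|z'|^\beta\Lambda + c|z''|^\beta\Lambda \le cd^\beta\Lambda$ by Step 1. The middle term is $\le c|y'-y''|^\beta\Lambda\le cd^\beta\Lambda$ by \eqref{eqn:recenter-decay-concl3}.
\item \textbf{Case B: $d<|z'|/4$.} Then $|z''|\in[\tfrac34|z'|,|z'|]$, and both points lie in $\Omega_{\chi,y'}\cap\{r/2\le|z|\le r\}$ with $r=|z'|$, for $\chi$ a fixed constant, say $\chi=2$. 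The dyadic seminorm estimate \eqref{eqn:recenter-decay-concl2.5} then gives $|D^2u(x')-D^2u(x'')|\le c d^\beta\Lambda$.
\end{itemize}
In Case B, if $|z''|$ is slightly below $r/2$ relative to some dyadic choice, we chain through one intermediate annulus. If needed we replace $\chi$ by a larger constant so that the annulus contains both points and the $y$-offset $|y''-y'|\le d<|z'|$.

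\emph{Main obstacle.} The delicate point is Case B near the boundary of the wedge $\Omega_{\chi,y'}$ and the bookkeeping between annuli. A related concern is ensuring that \eqref{eqn:recenter-decay-concl2.5} is uniform in $y'$, which it is by the finite cover construction in Lemma \ref{thm:recenter-decay}.

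A remaining subtlety is the identification of distributional $D^2u$ with the pointwise limits across the spine. Since the spine $\{0\}\times\R^l$ has codimension $\ge2$ (for $\Omega_0$ 0-symmetric of dimension $\ge 2$) and $Du\in C^{1}$ off it with bounded second derivatives, $Du$ is Lipschitz. Hence $u\in C^{2}$ with $D^2u=q$ on the spine, and the $C^{2,\beta}$ estimate follows.
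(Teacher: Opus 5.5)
Your proof is correct and follows essentially the paper's route. You bound $D^2u$ near the spine through $|D^2u(z,y)-q(y)|\le c|z|^\beta\Lambda$ from \eqref{eqn:recenter-decay-concl2} together with the H\"older continuity \eqref{eqn:recenter-decay-concl3} of $q$, and you use the annular seminorm bound \eqref{eqn:recenter-decay-concl2.5} when the two points are far from the spine relative to their separation. Your split on $d$ versus $|z'|/4$ is slightly cleaner than the paper's wedge-membership split: in Case B both points already lie in $\Omega_{1,y'}\cap\{|z'|/2\le|z|\le|z'|\}$, so the chaining and $\chi$-enlarging hedges in your ``main obstacle'' paragraph are unnecessary.
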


\begin{proof}
Write $\Lambda = ||u||_{L^2(\Omega \cap B_1)} + |f|_{C^\alpha(\Omega \cap B_1)}$.  It already follows from Lemma \ref{thm:recenter-decay} that $u \in C^2(\overline{\Omega} \cap B_{1/2})$, and $|u|_{C^2}(\Omega \cap B_{1/2}) \leq c(\Omega) \Lambda$.  We now show the $C^{2,\beta}$ estimate.

Take $x, x' \in B_{1/2} \cap \Omega$, and write $x = (z, y)$, $x' = (z', y')$.  Fix any $\chi \geq 1$.  First suppose $x \not \in \Omega_{\chi, y'}$ and $x' \not\in \Omega_{\chi, y}$.  Then we have
\[
|y - y'| > \max \{ \chi |z|, \chi |z'| \},
\]
and so we can use \eqref{eqn:recenter-decay-concl2}, \eqref{eqn:recenter-decay-concl3} to get
\begin{align*}
|D^2_{ij}u(x) - D^2_{ij} u(x')|
&\leq |D^2_{ij} u(x) - q_{ij}(y)| + |q_{ij}(y) - q_{ij}(y')| + |q_{ij}(y') - D^2_{ij}u(x')| \\
&\leq (c |z|^\beta + c |y - y'|^\beta + c |z'|^\beta) \Lambda \\
&\leq c|y - y'|^\beta \Lambda \\
&\leq c(\Omega, \chi)|x - x'|^\beta \Lambda.
\end{align*}

Suppose now $x \in \Omega_{\chi, y'}$.  First we can use \eqref{eqn:recenter-decay-concl2.5} to get
\[
|D^2_{ij} u(z, y) - D^2_{ij} u(z, y')| \leq c(\Omega, \chi) |y - y'|^\beta \Lambda,
\]
so we are left with bounding
\begin{equation}\label{eqn:induct-reg-1}
|D^2_{ij} u(z, y') - D^2 u_{ij} u(z', y')| \leq c(\Omega, \chi) |z - z'|^\beta \Lambda.
\end{equation}
Up to relabeling, we can suppose that $0 < |z'| \leq |z|$, and now break into two cases.  If $| |z'| - |z|| \leq |z|/10$, then we can again use \eqref{eqn:recenter-decay-concl2.5} to get \eqref{eqn:induct-reg-1}.  Otherwise, we have $|z'| \leq |z| \leq 10 ||z'| - |z|| \leq 10 |z - z'|$, and we can use \eqref{eqn:recenter-decay-concl2} to bound
\begin{align*}
|D^2_{ij} u(z, y') - D^2 u_{Ij}(z', y')| 
&\leq |D^2_{ij} u(z, y') - q_{ij}(y')| + |D^2_{ij} u(z', y') - q_{ij}(y')| \\
&\leq (c |z|^\beta + c |z'|^\beta) \Lambda \\
&\leq c(\Omega, \chi)|z - z'|^\beta \Lambda.
\end{align*}
The case when $x' \in \Omega_{\chi, y}$ is verbatim.  This proves Theorem \ref{thm:induct-reg}.
\end{proof}

\begin{theorem}\label{thm:P-implies-Q}
Suppose $(P_\Omega)$ holds for every polyhedral cone domain $\Omega \subset \R^{n+1}$.  Then $(Q_{\Omega \times \R^l})$ holds for any polyhedral cone domain $\Omega \subset \R^{n+1}$ and every $l \geq 0$. Equivalently, $(Q_{\Omega'})$ holds for any polyhedral cone domain $\Omega' \subset \R^{n+1+l}$ which is $(l' \geq l)$-symmetric.
\end{theorem}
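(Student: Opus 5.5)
We argue by downward induction on the dimension of symmetry, applying Lemma \ref{thm:induct-reg} at each step. Fix the ambient dimension $N = n+1+l$. A polyhedral cone domain $\Omega' \subset \R^N$ is $l'$-symmetric for some $0 \le l' \le N$. We prove $(Q_{\Omega'})$ for all $l'$-symmetric $\Omega'$ by downward induction on $l'$, starting from $l' = N$ and ending at the smallest symmetry dimension that occurs in the statement.

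\emph{Base case.} When $l' = N$ we have $\Omega' = \R^N$. The equation is then the interior Poisson equation, and standard Schauder theory gives $(Q_{\R^N})$. More generally, if $\Omega'$ is $(N-1)$-symmetric it is a half-space. Even reflection across the boundary reduces the Neumann problem to the interior one, so $(Q_{\Omega'})$ holds there as well. Note that no cone with $l' = N-1$ other than a half-space exists, and none has $N-2 < l' < N-1$.

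\emph{Inductive step.} Suppose $(Q_{\Omega''})$ holds for every $l''$-symmetric $\Omega''\subset\R^N$ with $l'' > l'$, and let $\Omega' = \Omega_0 \times \R^{l'}$ be $l'$-symmetric. Lemma \ref{thm:induct-reg} then yields $(Q_{\Omega'})$, provided $(P_{\Omega'})$ holds. Here $\Omega_0 \subset \R^{N-l'}$ is $0$-symmetric, and by Lemma \ref{lem:add-cross} it suffices to verify $(P_{\Omega_0})$.

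\emph{Verifying $(P_{\Omega_0})$.} This is where the dimension bookkeeping has to be handled carefully, and it is the main obstacle. The hypothesis gives $(P_\Omega)$ only for cones in $\R^{n+1}$, so we must identify each relevant $\Omega_0$ with a cone in $\R^{n+1}$. I would use Lemma \ref{lem:add-cross} in both directions. Write every cone of the form $\Omega\times\R^l$, with $\Omega\subset\R^{n+1}$ a cone, as $\Omega_1\times\R^{l_1}$ with $\Omega_1$ being $0$-symmetric. Then $\Omega_1\times\R^{l_1-l}$ lives in $\R^{n+1}$, so $(P_{\Omega_1\times\R^{l_1-l}})$ holds by hypothesis. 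The key claim is that $(P_{\Omega_1\times\R^k})$ implies $(P_{\Omega_1})$. This holds because any homogeneous Neumann harmonic function $b(z)$ on $\Omega_1$ extends trivially in $y$ to one on $\Omega_1\times\R^k$ with the same homogeneity. The spectral gap and the quadratic characterization therefore pass down to $\Omega_1$. With $(P_{\Omega_1})$ in hand, Lemma \ref{lem:add-cross} gives $(P)$ for every product $\Omega_1\times\R^{k}$, and in particular for all intermediate cones appearing in the induction.

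\emph{Closing the induction.} The family of cones $\{\Omega\times\R^l\}$ is not closed under passing to tangent cones $T_x\Omega'$ at $x\notin\{0\}\times\R^{l'}$, and the footnote to Lemma \ref{thm:recenter-decay} shows that only such tangent cones are needed. These are again of the form $\tilde\Omega\times\R^{l''}$ with $l''>l'$ and $\tilde\Omega$ a $0$-symmetric cone of dimension at most $n+1-(l''-l)$. Such a $\tilde\Omega$ is of the form $\Omega\times\R^j$ with $\Omega\subset\R^{n+1}$, since $\tilde\Omega\times\R^{n+1-\dim\tilde\Omega}\subset\R^{n+1}$. So the induction runs entirely within the class covered by the hypothesis. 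Finally, the equivalence in the last sentence of the statement is a restatement: an $l'$-symmetric cone in $\R^{n+1+l}$ with $l'\ge l$ is exactly $\Omega\times\R^l$ with $\Omega\subset\R^{n+1}$.
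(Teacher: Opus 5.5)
Your proposal is correct and is essentially the paper's proof: downward induction on the symmetry dimension $l'$ of cones in $\R^{n+1+l}$, with $l'=n+1+l$ and $l'=n+l$ handled by interior and (reflected) boundary Schauder estimates, and the inductive step given by Lemma~\ref{thm:induct-reg} once Lemma~\ref{lem:add-cross} supplies $(P_{\Omega'})$. Your explicit check that $(P_{\Omega_1\times\R^k})$ implies $(P_{\Omega_1})$, via $y$-independent extension, fills in a step the paper leaves implicit when it invokes Lemma~\ref{lem:add-cross}. Your tangent-cone paragraph is unnecessary, since the inductive hypothesis already covers all $l''$-symmetric cones with $l''>l'\ge l$, and its opening claim that the family is ``not closed'' contradicts the closure you then correctly verify.
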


\begin{proof}
First note by Lemma \ref{lem:add-cross}, we get $(P_{\Omega \times \R^l})$ for every such $\Omega \subset \R^{n+1}$, $l \geq 0$.  Fix an $l$, and now we prove by induction that $(Q_{\Omega'})$ holds for any $(l' \geq l)$-symmetric polyhedral cone domain $\Omega' \subset \R^{n+1+l}$.  If $l' = n+1+l$ or $l' = n+l$ then $(Q_{\Omega'})$ trivially holds by standard interior or boundary estimates.  By inductive hypotheses we can therefore assume $(Q_{\Omega'})$ holds for $(l'' > l')$-symmetric domains $\Omega'' \subset \R^{n+1+l}$, but then Lemma \ref{thm:induct-reg} implies $(Q_{\Omega'})$ holds also, and hence Theorem \ref{thm:P-implies-Q} follows by  induction.
\end{proof}

\bibliographystyle{plain}
\bibliography{bib.bib}

\end{document}